\newtheorem{thm}{Theorem}[section]
\newtheorem{lem}[thm]{Lemma}
\newtheorem{cor}[thm]{Corollary}
\newtheorem{observation}[thm]{Observation}
\theoremstyle{remark}
\newtheorem{rem}[thm]{Remark}
\theoremstyle{definition}
\newtheorem{defn}[thm]{Definition}
\theoremstyle{remark}
\newcommand{\POC}{Parallel $1$'s Criterion}
\newcommand{\SPOC}{Strict Parallel $1$'s Criterion}
\newcommand{\PLSST}{pair-linked strict similarity type}
\newcommand{\om}{\omega}
\newcommand{\sse}{\subseteq}
\newcommand{\contains}{\supseteq}
\DeclareMathOperator{\Simss}{Sim^{ss}}
\DeclareMathOperator{\Simp}{Sim^{ep}}
\DeclareMathOperator{\Sim}{Sim}
\DeclareMathOperator{\cl}{cl}
\DeclareMathOperator{\BL}{BL}
\newcommand{\re}{\restriction}
\newcommand{\bC}{\mathbb{C}}
\newcommand{\bD}{\mathbb{D}}
\newcommand{\bS}{\mathbb{S}}
\newcommand{\E}{\mathrm{E}}
\newcommand{\G}{\mathrm{G}}
\newcommand{\bG}{\mathbf{G}}
\newcommand{\bfA}{\mathbf{A}}
\newcommand{\psim}{\stackrel{ep}{\sim}}
\newcommand{\sssim}{\stackrel{ss}{\sim}}
\newcommand{\ra}{\rightarrow}
\newcommand{\lgl}{\langle}
\newcommand{\rgl}{\rangle}
\newcommand{\Fraisse}{Fra{\"{i}}ss{\'{e}}}
\newcommand{\Hubicka}{Hubi{\v{c}}ka}
\newcommand{\Komjath}{Komj{\'{a}}th}
\newcommand{\Masulovic}{Ma{\v{s}}ulovi{\'{c}}}
\newcommand{\Rodl}{R{\"{o}}dl}
\newcommand{\noprint}[1]{\relax}
\title[Exact big Ramsey degrees of the triangle-free Henson graph]{The Ramsey theory  of  the universal\\ homogeneous triangle-free graph Part II:\\  Exact big Ramsey degrees}
\author{Natasha Dobrinen}
\address{University of Denver\\
Department of Mathematics, 2390 S. York St., Denver, CO 80208, USA}
\email{natasha.dobrinen@du.edu}
  \urladdr{\url{http://cs.du.edu/~ndobrine}}
\thanks{The  author gratefully acknowledges support from   National Science Foundation Grant DMS-1901753}
\subjclass[2010]{03E02, 05D10, 05C55,  05C15, 05C05,  03C15, 03E75}
\keywords{Ramsey theory, universal homogeneous triangle-free graph,  canonical partitions, big Ramsey degrees, trees}
\begin{document}

\maketitle

\begin{abstract}
Building on work in \cite{DobrinenJML20},
for each finite triangle-free graph $\bG$, we
determine the equivalence relation on the
 copies of $\bG$ inside the
  universal homogeneous triangle-free  graph, $\mathcal{H}_3$,  with the smallest number of equivalence classes so that each one of the classes  persists in every isomorphic subcopy of $\mathcal{H}_3$.
This characterizes the exact big Ramsey degrees of $\mathcal{H}_3$.
It follows that the triangle-free Henson graph is a big Ramsey structure.
\end{abstract}

%%%%%%%%%%%%%%%%%%%%%%%%
%%%%%%%%%%%%%%%%%%%%%%%%
%%%%%%%%%%%%%%%%%%%%%%%%
%%%%%%%%%%%%%%%%%%%%%%%%

\section{Overview}\label{sec.overview}

This paper is a sequel to \cite{DobrinenJML20},  in which  the author proved  that the triangle-free Henson graph has finite big Ramsey degrees.
The original hope for that paper was to find the exact degrees, and we conjectured that the bounds found were optimal.
In this paper, we show that  while those bounds were correct for singletons, edges, and  non-edges,
more generally they were  not exact.
However, the structure of the  coding trees developed in that paper did achieve the best bounds known so far,
and a small  but significant modification of those coding trees in this paper will enable us to  prove the exact bounds.
In Section \ref{sec.NewSquiggleTree}, we
improve the results of Section 7 in  \cite{DobrinenJML20} and  then apply theorems  from
\cite{DobrinenJML20} to obtain better upper bounds.
In Section \ref{sec.brd},
  we prove   that these bounds are exact.
This is the first result on exact big Ramsey degrees for structures with forbidden irreducible substructures.

%Say something about embeddings versus copies and application of Zucker's work.

%Say something about Zucker and {\Hubicka}'s work.

%%%%%%%%%%%%%%%%%%%%%%%%
%%%%%%%%%%%%%%%%%%%%%%%%
%%%%%%%%%%%%%%%%%%%%%%%%
%%%%%%%%%%%%%%%%%%%%%%%%

\section{Introduction}\label{sec.intro}

The  universal homogeneous triangle-free  graph,  denoted by $\mathcal{H}_3$ and also known as the {\em triangle-free Henson graph},
is the \Fraisse\ limit of the class of finite triangle-free graphs, $\mathcal{G}_3$.
We say that $\mathcal{H}_3$ has {\em finite big Ramsey degrees} if
for each finite triangle-free graph $\bG\in\mathcal{G}_3$,
 there is  a positive integer  $T$
 such that
 $$
 \mathcal{H}_3\ra(\mathcal{H}_3)^{\bG}_{k,T}
 $$
holds for any $k\ge 1$.
When  such a $T$ exists,
we let $T(\bG,\mathcal{H}_3)$
denote the least such $T$, and call it the {\em big Ramsey degree} of $\bG$ in $\mathcal{H}_3$, using the terminology in \cite{Kechris/Pestov/Todorcevic05}.

The reader interested in a broader understanding of  the area is  referred  to the papers \cite{DobrinenJML20} and \cite{DobrinenH_k19}, where   extensive  background on big Ramsey degrees of the Henson graphs, including results known at the time, is provided.
An expository article  \cite{DobrinenIfCoLog20} on forcing and the method of coding trees also provides a general overview of this area.
Here,  we shall provide a minimal overview of the problem, an update on currently known results,  and the results in this paper.

It is important
to distinguish between proving  that
big Ramsey degrees are finite
(that is, finding upper bounds)
and characterizing the actual numbers  $T(\bG,\mathcal{H}_3)$ via some structures from which they can be calculated.
In the first case, we say that $\mathcal{H}_3$ has  {\em finite big Ramsey degrees}
and in the second case, we say that
{\em exact big Ramsey degrees} are  {\em characterized}.
When exact big Ramsey degrees are characterized using some extra structure (in our case, some structure implicit in coding trees),
then the results of Zucker in \cite{Zucker19} regarding big Ramsey structures and universal
completion flows in topological dynamics apply.
This is one reason why characterizing exact big Ramsey degrees is of much current interest.

%Lastly,  we say that the  big Ramsey degrees are   {\em calculated}   if the number $T(\bG,\mathcal{H}_3)$ has been calculated, or found in terms of some recursive function from which it can be easily calculated.  Big Ramsey degrees  so far have only  been calculated for the rationals by Devlin in \cite{DevlinThesis},  the structures $\bQ_n$  and the  tournament $\bS(2)$ by Laflamme, Nguyen Van Th\'{e}, and Sauer in  \cite{Laflamme/NVT/Sauer10}, and the  circular directed graphs $\mathbf{S}(n)$ for $n\ge 3$  in a recent paper of Barbosa \cite{Barbosa20}. For the Rado graph,  big Ramsey degrees   have been calculated up to the cardinality of the graphby Larson in  \cite{Larson08}.

%***
%Add the results of NVT for ultrametric space:  - did he calculate the big Ramsey degrees?
%***

In many  cases where exact big Ramsey degrees have been characterized,
this has been done via  finding canonical partitions via some sort of tree structures.
(A new method using category theory has recently  been  successfully developed by
\Masulovic\ in \cite{Masulovic18}  and Barbosa in \cite{Barbosa20}.)
In the terminology of  \cite{Laflamme/Sauer/Vuksanovic06},
a partition $\mathcal{P}_0,\dots,\mathcal{P}_{T-1}$ of the copies of a structure $\bfA$ in an infinite structure $\mathcal{S}$ is {\em canonical}
if the following holds:
For each coloring
$f:{\mathcal{S}\choose \bfA}\ra k$, where $k\ge 2$,
 there is an isomorphic substructure $\mathcal{S}'$ of $\mathcal{S}$
such that for each $n<T$,
$f$ takes one color in ${\mathcal{S}'\choose \bfA}\cap \mathcal{P}_n$;
 moreover,
for each isomorphic substructure $\mathcal{S}'$ of $\mathcal{S}$,
${\mathcal{S}'\choose \bfA}\cap \mathcal{P}_n$ is non-empty.
This latter property is called {\em persistence}.

In \cite{DobrinenJML20},
we developed the notion of strong coding tree and  of  incremental strict similarity type
%(Definition \ref),
 and proved that the number of incremental strict similarity types of antichains coding $\bG$ is an upper bound for $T(\bG,\mathcal{H}_3)$.
%The following is a paraphrase of Theorem 8.9 in  \cite{DobrinenJML20} (upon application of the antichain $\bD$ from Section 9 of that paper).
%\begin{thm}[\cite{DobrinenJML20}]\label{thm.JMLUpperBounds}
%Let $\bG$ be a finite triangle-free graphs, and let $A$ be a finite antichain of coding nodes in a strong coding tree $T$. Given a finite coloring  $h$ of all subsets of $T$ which are strictly similar to $A$,  there is an incremental strong coding tree $S\le T$ such that all subsets of $S$ strictly similar to $A$ have the same $h$ color. It follows that the big Ramsey degree $T(\bG,\mathcal{H}_3)$ is bounded by the number of strict similarity types of antichains coding $\bG$.
%\end{thm}
In this paper, we shall
refine work from Sections 7--9 of \cite{DobrinenJML20}  to prove better upper bounds.
Then we shall  use  ideas from the proof of Theorem 4.1 in \cite{Laflamme/Sauer/Vuksanovic06}
and build some new methods for triangle-free graphs to prove that these better upper bounds are exact.

 The characterization of the big Ramsey degrees of the triangle-free Henson graph  is via the notion of essential pair similarity.
 Throughout, we work with an enumerated copy of $\mathcal{H}_3$, and its induced coding tree $\bS$.
 An {\em essential linked pair}
 is a pair of nodes $s,t\in\bS$ such that
 $s$ and $t$ both code an edge with a common vertex of $\mathcal{H}_3$,
 but the least vertex of $\mathcal{H}_3$ with which $s$ codes an edge differs from the least vertex of $\mathcal{H}_3$ with which $s$ codes an edge.
 Two trees with coding nodes  $A$ and $B$ are {\em essential pair similar} (or {\em ep-similar}) if  $A$ and $B$ are strongly similar, and additionally the  strong similarity map from $A$ to $B$ preserves the order in which new essential linked pairs appear.
 Given a finite triangle-free graph $\bG$,
 we let $\Simp(\bG)$ be a set of representatives
 from among the ep-similarity types of
   antichains $A$ (of coding nodes)
  representing $\bG$ such that each pair of coding nodes $c^A_m,c^A_n$ in $A$ ($m<n$)
   coding a non-edge between their represented vertices is linked (meaning they code an edge with a common vertex in $\mathcal{H}_3$).
 \vskip.1in

\noindent{\bf Theorem \ref{thm.canonpart}.}
Let $\bG$ be a finite triangle-free graph and let $h$ be a coloring of all copies of $\bG$ inside $\mathcal{H}_3$.
Then there is a subgraph $\mathcal{H}$ of $\mathcal{H}_3$ which isomorphic to $\mathcal{H}_3$
in which  for each $A\in\Simp(\bG)$,
all copies of $\bG$
represented by an antichain which is ep-similar to $A$
 have the same color.
Moreover,  each  ep-similarity type in $\Simp(\bG)$ persists in the coding tree induced by $\mathcal{H}$.
\vskip.1in

This characterizes the exact big Ramsey degrees of finite triangle-free graphs in the triangle-free Henson graph.
\vskip.1in

\noindent{\bf Theorem \ref{cor.main}.}
Given a finite triangle-free graph $\bG$,
the big Ramsey degree of $\bG$ in the triangle-free graph is exactly  the number of essential pair similarity types of strongly skew antichains coding $\bG$:
\begin{equation}
T(\bG,\mathcal{H}_3)=|\Simp(\bG)|.
\end{equation}

Some related  recent results
deserve mention.
Zucker proved in
 \cite{Zucker20}
 that \Fraisse\ classes with finitely many binary relations and finitely many forbidden irreducible substructures have finite big Ramsey degrees.
 This generalized the work of the author
  in \cite{DobrinenJML20} and \cite{DobrinenH_k19}
  showing that
  all $K_n$-free Henson graphs  have finite big Ramsey degrees.
Very recently, \Hubicka\ has found the first non-forcing proof that $\mathcal{H}_3$ has finite big Ramsey degrees in \cite{Hubicka_CS20}.
In that paper,  he used
  the Carlson-Simpson theorem to prove that the universal homogeneous partial order has finite big Ramsey degrees, and by similar methods was able to recover bounds for the big Ramsey degrees of $\mathcal{H}_3$.
 The proofs
 in  \cite{Zucker20}  and \cite{Hubicka_CS20}
  that upper bounds exist
  are quite a bit shorter than those of the author in \cite{DobrinenJML20} and \cite{DobrinenH_k19}.
This comes at the expense of looser upper bounds.
Part of the motivation of the extreme  structure of the coding trees of the author was to
construct a space of coding trees which themselves
recover indivisibility results of \Komjath\ and \Rodl\ for $\mathcal{H}_3$ \cite{Komjath/Rodl86} and of El-Zahar and Sauer  for the rest of the $K_n$-free Henson graphs \cite{El-Zahar/Sauer89}.
The other part of the motivation was to prove exact big Ramsey degrees.
This paper shows that adding a small but important
requirement to the coding trees in \cite{DobrinenJML20} results in the exact big Ramsey degrees for $\mathcal{H}_3$.
Something similar ought to be possible for  other binary relational structures with some forbidden irreducible substructures.

%More recently, \Hubicka\ has used the Carlson-Simpson theorem to
%\cite{Hubicka20}.

%In particular, neither of those papers achieves the bounds proved in \cite{DobrinenJML20}.

%***
%Also, doesn't my work automatically give bounds for the ordered versions because of the lex ordering in the trees?
%***

%Discuss the embedding formulation of big Ramsey degrees, the back and forth between the two, and how our results show that these \Fraisse\ limits are big Ramsey structures, so Andy's work applies.

%%%%%%%%%%%%%%%%%%%%%%
%%%%%%%%%%%%%%%%%%%%%%
%%%%%%%%%%%%%%%%%%%%%%
%%%%%%%%%%%%%%%%%%%%%%

\section{Review}\label{sec.Review}

In this section, we review  some concepts   from  \cite{DobrinenJML20}  to ease the reading of this paper.
The reader familiar with that paper can skip this section.

%%%%%%%%%%%%%%%%%%%%%%%%%
%%%%%%%%%%%%%%%%%%%%%%%%%

\subsection{The strong triangle-free   coding tree $\bS$}\label{subsec.bS}

The set $2^{<\om}$ is the collection of all finite length sequences of $0$'s and $1$'s.
We let $0^{<\om}$ denote $\{0\}^{<\om}$, the collection of all finite sequences of $0$'s.
Given $s\in 2^{<\om}$,  we let $|s|$ denote the {\em length} of $s$.
The {\em meet} of two nodes $s,t\in 2^{<\om}$, denoted $s\wedge t$,
is the longest member $u\in 2^{<\om}$ which is an initial segment of both $s$ and $t$.
In particular, if $s\sse t$ then $s\wedge t=s$.
A set  of nodes $A\sse 2^{<\om}$ is {\em closed under meets}
if $s\wedge t$ is in $A$, for each pair $s,t\in A$.
Given $A\sse 2^{<\om}$, we let $\cl(A)$ denote the
set $\{s\wedge t:s,t\in A\}$
and call this the
{\em meet-closure} of $A$.
Since $s$ and $t$ are allowed to be equal in the definition of $\cl(A)$, the meet-closure of $A$ contains $A$.
We adhere to the  following definition of tree, which is standard for  Ramsey theory on trees.

\begin{defn}\label{defn.tree}
A  subset $T\sse 2^{<\om}$  is a {\em tree}
if $T$
 is closed under meets and  for each pair $s,t\in T$
with $|s|\le |t|$,
$t\re |s|$ is also in $T$.
\end{defn}

Graphs can be coded via nodes in  $2^{<\om}$ with the edge relation coded via passing number.
Given two vertices $v,w$ in some graph $\mathrm{G}$,
two nodes
 $s,t\in 2^{<\om}$   {\em represent} $v$ and $w$
  if, assuming $|s|<|t|$,
then
$v$ and $w$ have an edge between them
if and only if
 $t(|s|)=1$.
The number $t(|s|)$ is called the {\em passing number} of $t$ at $s$ (see \cite{Sauer06} for the first usage of this terminology).
The following appears as Definition 3.1 in \cite{DobrinenJML20}.

\begin{defn}[Tree with coding nodes]\label{defn.treewcodingnodes}
A {\em tree with coding nodes}
is a structure $(T,N;\sse,<,c^T)$ in the language of
$\mathcal{L}=\{\sse,<,c\}$,
 where $\sse$ and $<$ are binary relation symbols  and $c^T$ is a unary function symbol,
 satisfying the following:
 $T$ is a subset of  $2^{<\om}$ satisfying that   $(T,\sse)$ is a tree, $N\le \om$ and $<$ is the usual linear order on $N$,  and $c^T:N\ra T$ is an injective  function such that  $m<n<N$ implies $|c^T(m)|<|c^T(n)|$.
\end{defn}

We often denote $c^T(n)$ by $c^T_n$, especially when working with more than one tree at the same time.
The following is Definition 3.3 in \cite{DobrinenJML20}.

\begin{defn}\label{def.rep}
A graph $\G$ with vertices enumerated as $\lgl v_n:n<N\rgl$ is {\em  represented}  by a tree $T$ with  coding nodes $\lgl c^T_n:n<N\rgl$
if and only if
for each pair $i<n<N$,
 $v_n\, \E\, v_i\longleftrightarrow  c^T_n(l_i)=1$.
We will often simply say that $T$ {\em codes} $\G$.
\end{defn}

The triangle-free Henson graph $\mathcal{H}_3$ can be represented by a tree with coding nodes.
\vskip.1in

\noindent {\bf Construction of the strong triangle-free coding tree $\bS$}.
Let $\mathcal{H}_3$ be a Henson graph with universe
 $\lgl v_n:n<\om\rgl$ labeled in order-type $\om$.
 Assume that this representation of $\mathcal{H}_3$ has the following properties:
\begin{enumerate}
 \item
For each $n<\om$, $v_n\, E\, v_{n+1}$.
 \item
 For each $n<\om$, for all $i\le 2n$, $v_{2n+1}\,E\, v_i$ if and only if $i=2n$.
\end{enumerate}

Let $\bS$ be the coding tree for $\mathcal{H}_3$ constructed as follows:
Let $c^{\bS}_0$ be the empty sequence; this coding node represents the vertex $v_0$.
In general, given $n>0$ and supposing $c^{\bS}_m$ is defined for all $m<n$,
 take  $c^{\bS}_n$ to be the (unique) node in $2^n$ such that for all $m<n$, $c^{\bS}_n$ has passing number $1$ at $c^{\bS}_m$ if and only if $v_n\, E\, v_m$.
 The $m$-th level of $\bS$ consists of all nodes $s\in 2^m$ for which there exists  an $n\ge m$ such that  $s\sse c^{\bS}_n$.
\vskip.1in

The reader  familiar with  the paper \cite{DobrinenJML20} will notice that this construction of  $\bS$  slightly differs from the  presentation   of  Example 3.15 there.
In \cite{DobrinenH_k19}, we streamlined the presentation of  $\bS$ and of strong coding trees, and that is reflected here.
Requirement (i) here is the same as in \cite{DobrinenJML20}, and requirement (ii) here
is the part  of (ii) in \cite{DobrinenJML20}
corresponding to $F_{3i+j}=\emptyset$ for all $i<\om$ and $j\in\{0,2\}$.
The rest of requirements (ii) and (iii) from
 \cite{DobrinenJML20} were  formulated to  ensure that the coding nodes would be dense in the tree and represent the Henson graph.
 This is taken care of by
 enumerating a copy of the Henson graph and using it to define the corresponding coding tree (see \cite{DobrinenH_k19}).

\begin{rem}
Our requirement (i)  ensures that all coding nodes in $\bS$ (besides $c^{\bS}_0$) do not split in $\bS$.
This, in addition to using skew trees (see Definition \ref{def.sct}), had the effect of  recovering directly, from our Ramsey theory of strong coding trees in \cite{DobrinenJML20}, the result of Komj\'{a}th and \Rodl\ \cite{Komjath/Rodl86} that $\mathcal{H}_3$ is indivisible.
%That result is not  directly recovered from  the Ramsey theory of the  coding trees  of  Zucker in \cite{Zucker20}, but rather will follow upon further work.
\end{rem}

%%%%%%%%%%%%%%%%%%%%%%
%%%%%%%%%%%%%%%%%%%%%%
%%%%%%%%%%%%%%%%%%%%%%

\subsection{Ramsey theorem finite trees with the  \SPOC}\label{subsec.RTReview}

In this subsection, we review the Ramsey theorem   from  \cite{DobrinenJML20}, which aids in providing upper bounds for the big Ramsey degrees in the triangle-free Henson graph.

Recalling Definition
\ref{defn.treewcodingnodes},
 let  $T\sse 2^{<\om}$  be a tree with coding nodes $\lgl c^T_n:n<N\rgl$, where $N\le \om$,
and let
  $\ell^T_n$ denote $|c^T_n|$.
  Let
$\widehat{T}$ denote  $\{t \re n:t\in T$ and $n\le |t|\}$, the tree of all initial segments   of members of $T$.
A node $s\in T$  is called  a {\em splitting node}  if both $s^{\frown}0$ and $s^{\frown}1$  are in $\widehat{T}$.
Given $t\in T$, the  {\em level} of  $T$ of length  $|t|$  is  the set of all $s\in T$ such that $|s|=|t|$.
 $T$   is {\em skew} if
each level of $T$ has exactly one of either a  coding node or a splitting node.
A skew tree
$T$ is {\em strongly skew} if  additionally
for each splitting node $s\in T$,
every
 $t\in T$ such that $|t|>|s|$ and  $t\not\supset s$ also
satisfies
$t(|s|)=0$.
Given a strongly skew  coding tree $T\sse\bS$,  let
 $\lgl d^T_m:m<M\rgl$  enumerate the  coding  and splitting nodes of $T$ in increasing order of length;
 the nodes  $d^T_m$ are called  the {\em critical nodes} of $T$.
Let $m_n$ denote the integer
such that $d^T_{m_n}=c^T_n$.
The {\em $0$-th interval} of $T$ is the set of those nodes in $T$ with lengths in $[0,\ell^T_0]$, and for $0<n<N$, the {\em $n$-th interval} of $T$ is the set of those nodes in  $T$ with lengths in $(\ell^T_{n-1},\ell^T_n]$.

The {\em lexicographic order} on $2^{<\om}$
between two nodes
$s,t\in 2^{<\om}$, with neither extending the other,
is defined by
$s<_{\mathrm{lex}} t$ if and only if $s\contains (s\wedge t)^{\frown}0$ and $t\contains (s\wedge t)^{\frown}1$.
It is important to note that if $T$ is a strongly skew subset of $\bS$, then each node $s$ at the level of a coding node $c_n$ in $T$
has exactly one  immediate extension in $\widehat{T}$.
For two nodes $s,t$ with $|s|<|t|$, the number $t(|s|)$ is called the {\em passing number} of $t$ at $s$.
The following appears as Definition 4.9 in \cite{DobrinenJML20},
 augmenting Sauer's Definition 3.1 in \cite{Sauer06}  to the setting of  trees with coding  nodes.

\begin{defn}\label{def.3.1.likeSauer}
Let $S,T$ be strongly skew meet-closed subsets of $\bS$.
The function $f:S\ra T$ is a {\em strong similarity} of $S$ to $T$ if for all nodes $s,t,u,v\in S$, the following hold:
\begin{enumerate}
\item
$f$ is a bijection.
\item
$f$ preserves lexicographic order: $s<_{\mathrm{lex}}t$ if and only if $f(s)<_{\mathrm{lex}}f(t)$.
\item
$f$ preserves initial segments:
$s\wedge t\sse u\wedge v$ if and only if $f(s)\wedge f(t)\sse f(u)\wedge f(v)$.

\item
$f$ preserves meets:
$f(s\wedge t)=f(s)\wedge f(t)$.

\item
$f$ preserves relative lengths:
$|s\wedge t|<|u\wedge v|$ if and only if
$|f(s)\wedge f(t)|<|f(u)\wedge f(v)|$.

\item $f$ preserves coding  nodes:
$f$ maps the set of  coding nodes in $S$
onto the set of coding nodes in $T$.
\item

$f$ preserves passing numbers at coding nodes:
If $c$ is a coding node in $S$ and $u$ is a node in $S$ with $|u|>|c|$,
then $f(u)(|f(c)|)=u(|c|)$;
in words, the passing number of  $f(u)$ at $f(c)$  equals the passing number of $u$ at $c$.
\end{enumerate}
\end{defn}

We are going to make one terminology shift in this paper to ease descriptions of a property central to the exact big Ramsey degrees of triangle-free graphs.

\begin{defn}[Linked Pairs]\label{defn.linked}
We shall call a pair of nodes $\{s,t\}\sse\bS$ {\em linked} if and only if  there is some $\ell<\min(|s|,|t|)$ such that $s(\ell)=t(\ell)=1$.
We say that $s$ and $t$ are {\em linked  at level $\ell$} if and only if $s(\ell)=t(\ell)=1$.
In the special case that  $\{s,t\}$ is a linked set such that $s\wedge t$ is not in $0^{<\om}$, then we say that $s$ and $t$ are {\em base-linked}.
Given a subtree $A\sse \bS$ and a node $s\in A$,
let $\BL_A(s)$ denote the collection of all nodes $t\in A\re (|s|+1)$ which are base-linked with $s$.

A level subset  $X$ of $\bS$ is {\em pairwise linked}
if and only if   each  pair  of nodes $\{s,t\}\sse X$
is linked.
Given a  subtree $A\sse \bS$ and
 we say that a level  subset $X\sse A$ is a {\em maximal pairwise linked set} in $A$,
  or is {\em maximally linked}, if and only if
 $X$ is  pairwise linked, and  for  any $t\in A\re \ell_X$ not in $X$,
 the set $X\cup\{t\}$ is not pairwise  linked.
\end{defn}

\begin{rem}
What we are calling {\em linked} in this paper is exactly what we called {\em parallel $1$'s} in \cite{DobrinenJML20} and
  a {\em pre-$3$-clique} in \cite{DobrinenH_k19}.
It will be much less cumbersome to use this new terminology here.
\end{rem}

Note that a pair of nodes  $s$ and $t$ are linked at level $\ell$ if and only if for any $m$ and $n$ such that the  coding nodes $c_m$ and $c_n$ in $\bS$ extend $s$ and $t$, respectively,
the vertices $v_m$ and $v_n$ in the enumerated  Henson graph $\mathcal{H}_3$
both have an edge with the vertex $v_{\ell}$.
The first instances where pairs of nodes code an edge with a common vertex in $\mathcal{H}_3$  will turn out to be central to the exact big Ramsey degrees of triangle-free graphs.

Given a subset $A\sse\bS$ and  $\ell<\om$, define
\begin{equation}\label{eq.A_1}
A_{\ell,1}=\{s\re (\ell+1):s\in A,\ |s|\ge \ell+1,\mathrm{\  and\ }s(\ell)=1\},
\end{equation}
the level set of nodes in $s\in A\re(\ell+1)$ such that $s$ codes an edge with the vertex $v_{\ell}$ of
$\mathcal{H}_3$.

\begin{defn}\label{defn.mls}
A level set $X$ is {\em mutually linked} at $\ell$ if  for each $t\in X$, $t(\ell)=1$.
We say that $\ell$ is a {\em minimal level of a new mutually linked  set  in $A$} if
the set $A_{\ell,1}$ has at least two distinct members,
and for each $\ell'<\ell$,
the set $\{s\in A_{\ell,1}:  s(\ell')=1\}$ has cardinality strictly smaller than $|A_{\ell,1}|$.
In this case, we call $A_{\ell,1}$ a {\em new  mutually linked  set  in $A$}.
\end{defn}

\begin{defn}\label{defn.witness}
Given  a minimal level $\ell$  of a new mutually linked set in $A$, we say that $A_{\ell,1}$ is
{\em witnessed by the coding node $s_n$   in $A$}
if $s_i(|s_n|)=1$ for each $i\in I_{\ell}^A$, and
 either $|s_n|\le \ell$
or else
both $|s_n|> \ell$ and
 $A$ has no splitting nodes and no coding nodes of length in $[\ell,|s_n|]$.
 \end{defn}

The following is Definition 4.1 in \cite{DobrinenJML20}.

\begin{defn}[\POC]\label{defn.parallel1sProp}
Let $T\sse 2^{<\om}$ be a  strongly skew tree with coding nodes $\lgl c_n:n<N\rgl$, where $N\le\om$.
We say that $T$ satisfies the {\em \POC} if
the following hold:
Given any
 set of two or more nodes $\{t_i:i<\tilde{i}\}\sse T$
and some $\ell$ such that $t_i\re(\ell+1)$, $i<\tilde{i}$, are all distinct, and
 $t_i(\ell)=1$ for all $i<\tilde{i}$,
\begin{enumerate}
\item
There is a coding node $c_n$ in $T$ such that
 for all $i<\tilde{i}$,
$|c_n|<|t_i|$ and
$t_i(|c_n|)=1$;
we say that  $c_n$ {\em witnesses} that  $\{t_i:i<\tilde{i}\}$ is mutually linked.
\item
Letting $\ell'$ be  least such that
$t_i(\ell')=1$ for all $i<\tilde{i}$,
and  letting
$n$ be
 least  such that $c_{n}$ witnesses
 that $\{t_i:i<\tilde{i}\}$ is mutually linked,
then $T$ has no splitting nodes and no coding nodes
 of lengths strictly  between $\ell'$ and $|c_{n}|$.
\end{enumerate}
\end{defn}

Strong coding trees  in $\bS$ were defined in Subsection 4.3 of \cite{DobrinenJML20}.
That definition was streamlined in the more general paper \cite{DobrinenH_k19} for all $k$-clique-free Henson graphs.  We paraphrase here the essentials of that definition.

\begin{defn}[Strong Coding Tree]\label{def.sct}
 A {\em strong coding tree}
 is a strongly skew coding subtree $T$ of $\bS$ such that
 \begin{enumerate}
 \item
The coding nodes in $T$ are dense in $T$ and  represent a copy of $\mathcal{H}_3$ in the same order as $\bS$;
\item
$T$ satisfies the \POC.
\item
For each $n<\om$,
there is a one-to-one correspondence between the nodes in
$T\re(\ell^T_n+1)\setminus \{0^{(\ell^T_n+1)}\}$ and the  $1$-types over the graph represented by the coding nodes $\{c^T_i:i\le n\}$.
\end{enumerate}
\end{defn}

We also require that the splitting nodes in a strong coding tree $T$  between levels with coding nodes  split in reverse lexicographic order.  However,  that property is not essential to the
proofs, but rather serves to make all strong coding trees strongly similar to each other, a property which is important for topological Ramsey space theory.

The following is Definition 6.1 in \cite{DobrinenJML20}.

\begin{defn}[\SPOC]\label{defn.strPOC}
A  subtree $A$ of a strong coding tree satisfies the {\em \SPOC}
if $A$
for each  $\ell$ which is  the minimal level of a new mutually linked set  in $A$,
\begin{enumerate}
\item
The critical node in $A$ with minimal length greater than or equal to  $\ell$ is a coding node in $A$, say $c$;
\item
There are no terminal nodes in $A$ in the interval $[\ell,|c|)$  ($c$ can be terminal in $A$);
\item
$A_{\ell,1}=\{t\re (\ell+1):t\in A_{|c|,1}\}$; that is, $c$ witnesses the  mutually linked set $A_{\ell,1}$.
\end{enumerate}
\end{defn}

Note that the \SPOC\ implies the \POC.
The following is Theorem 6.3 in \cite{DobrinenJML20}; we modify the presentation, in order to avoid unnecessary definitions in this paper.

\begin{thm}[Ramsey Theorem for finite trees with \SPOC]\label{thm.MillikenIPOC}
Let $T$
 be a strong coding tree
and
let $A$ be a finite subtree  of $T$ satisfying the \SPOC.
Then for any coloring of all strongly  similar copies of $A$ in $T$ into finitely many colors,
 there is a strong coding subtree $S\le T$ such that
 each $B\sse S$ satisfying the \SPOC\ and
  strongly  similar  to  $A$  has  the same color.
\end{thm}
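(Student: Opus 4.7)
The plan is to prove Theorem \ref{thm.MillikenIPOC} by induction on the number of critical nodes of $A$, executing a Milliken-style level-by-level fusion while preserving both the strong coding tree structure and the \SPOC. For the base case in which $A$ consists of a single critical node, the conclusion follows by a direct pigeonhole on the initial extensions inside $T$, using the density of coding nodes guaranteed by Definition \ref{def.sct}(3).

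For the inductive step, decompose $A$ as $A^{-}$ (the truncation of $A$ below its final critical node $d^A_{k-1}$) together with a ``cap'' consisting of the level containing $d^A_{k-1}$ and the passing-number constraints imposed on it by the \SPOC. Any strongly similar \SPOC-copy $B$ of $A$ in $T$ then decomposes uniquely as a strongly similar \SPOC-copy $B^{-}$ of $A^{-}$ together with a compatible \SPOC-respecting extension. The given coloring induces, for each fixed $B^{-}$, an auxiliary coloring on the finitely many ``extension types'' that can cap $B^{-}$ to a full copy of $A$. Applying the inductive hypothesis to $A^{-}$ first yields a strong coding subtree $T_0 \le T$ on which the color depends only on the extension type. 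A second fusion $T_0 \ge T_1 \ge T_2 \ge \dots$ then stabilizes each extension type in turn via a finite pigeonhole at each new critical level, and the diagonal intersection of this sequence is the desired strong coding subtree $S$.

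The main obstacle is that each shrinking step $T_n \ge T_{n+1}$ must simultaneously maintain three demands: the defining properties of a strong coding tree (Definition \ref{def.sct}), especially the density of coding nodes and the \POC; the \SPOC\ for every candidate $B \sse T_{n+1}$ strongly similar to $A$, which requires a coding node to appear immediately after each minimal level of a new mutually linked set in $B$; and the color-stability needed for the inductive pigeonhole. Reconciling these demands forces the choice of splitting and coding nodes at each fusion stage to be tightly coordinated with the passing-number pattern of $A$. It is precisely here that the \SPOC\ hypothesis on $A$ is used decisively: because $A$ prescribes exactly where new mutually linked sets appear and prescribes that they are witnessed by immediately-following coding nodes, the fusion is able to select extensions whose witnessing coding nodes match the pattern required by $A$, thereby preserving the \SPOC\ while still leaving enough combinatorial freedom to pigeonhole the color. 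In practice this coordination is most cleanly carried out by a forcing argument over a space of finite approximations to $S$, where genericity delivers the simultaneous homogeneity and SPOC-preservation, and the abstract Ramsey space machinery for strong coding trees developed earlier in \cite{DobrinenJML20} ensures that the requisite pigeonhole axioms are satisfied.
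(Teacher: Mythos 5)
First, a point of orientation: this paper does not actually prove Theorem \ref{thm.MillikenIPOC}; it is quoted (with streamlined terminology) from Part I, where it appears as Theorem 6.3 of \cite{DobrinenJML20}, so the comparison is with that proof. Your overall architecture --- induction on the number of critical nodes of $A$, homogenization of the ``one extra critical level,'' fusion of a decreasing sequence of strong coding trees, with a forcing argument supplying the homogenization --- is indeed the architecture of the cited proof.

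The gap is that the two places where you invoke a pigeonhole are not pigeonholes, and they are where essentially all of the work lies. In your base case, a single critical node already has infinitely many strongly similar copies in $T$, and obtaining one color on an entire strong coding subtree is an indivisibility-type statement (compare Komj\'{a}th--\Rodl); it does not follow ``by a direct pigeonhole on the initial extensions.'' Likewise, in the inductive step, for a fixed copy $B^{-}$ of $A^{-}$ the compatible caps do not form a finite set of extension types: the new critical node can occur at any of infinitely many levels and in infinitely many positions, so the induced coloring is a coloring of an infinite family of level-set end-extensions of $B^{-}$, not something one can stabilize ``via a finite pigeonhole at each new critical level.'' Homogenizing such colorings is exactly the content of the Halpern--\Lauchli-style theorems of Section 5 of \cite{DobrinenJML20}, proved by the Harrington-style forcing argument, with separate treatment of the case where the new critical node is a splitting node and the case where it is a coding node; those theorems, together with the Extension Lemmas guaranteeing that finite approximations can be extended inside $T$ without creating unwitnessed new mutually linked sets (so that the \SPOC\ and properties (1)--(3) of Definition \ref{def.sct} survive the fusion), constitute the actual proof. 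Your sketch correctly names these obstacles but defers them to ``genericity delivers the simultaneous homogeneity and SPOC-preservation,'' which is precisely the part that must be formulated and proved; as written, the proposal is a faithful outline of the known strategy rather than a proof.
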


%%%%%%%%%%%%%%%%%%%%%%
%%%%%%%%%%%%%%%%%%%%%%
%%%%%%%%%%%%%%%%%%%%%%
%%%%%%%%%%%%%%%%%%%%%%
%%%%%%%%%%%%%%%%%%%%%%
%%%%%%%%%%%%%%%%%%%%%%

\section{Improved upper bounds}\label{sec.NewSquiggleTree}

In Theorem 8.9 of   \cite{DobrinenJML20}, we proved
  that for each   finite antichain $A$ of coding nodes,
given any coloring of  the strict similarity
 copies of $A$  inside a strong coding tree,  there is a strong coding subtree  $S$ in which   all  strictly similar copies of $A$  have the same color.
By taking  an antichain of coding nodes representing
$\mathcal{H}_3$ inside an incremental strong coding tree,
we obtained very good bounds for the  big Ramsey degrees, which we conjectured to be the exact bounds.

It turns out that those bounds are not exact for most finite triangle-free graphs (they are exact for singletons, edges, and non-edges).
However, incremental trees were  structurally on the right track.
In this section, we fine-tune that approach to obtain better upper bounds, which will be proved to be optimal in the next section.
 Subsection \ref{subsec.linkedoverview} contains the  refined version of strict similarity, which we call {\em essential pair similarity} (Definition \ref{defn.p-sim}), which will yield the exact big Ramsey degrees for triangle-free graphs.
 There, we give an overview of the main theorems of this section, Theorems \ref{thm.simple} and \ref{thm.partitionthm}, improving the upper bounds for big Ramsey degrees.
 Subsections \ref{subsec.clct} and \ref{subsec.newD}
provide the details on how to refine work in Sections 7--9  of \cite{DobrinenJML20}
   to produce those two theorems.

%%%%%%%%%%%%%%%%%%%%
%%%%%%%%%%%%%%%%%%%%

\subsection{Essential pair similarity and improved upper bounds}\label{subsec.linkedoverview}

The canonical partitions proved for the triangle-free Henson graph in this paper have a simple description, given here.
The reader convinced of the
ability of the methods in \cite{DobrinenJML20} to produce Theorems
 \ref{thm.simple}
 and
 \ref{thm.partitionthm}
below
may enjoy reading this subsection and then skipping to Section \ref{sec.brd}.
For the unconvinced reader, Subsection \ref{subsec.clct} provides the details for  how to obtain these two new theorems   from a small  but important refinement of the work in \cite{DobrinenJML20}.

   We begin with some terminology.
By an {\em antichain of coding nodes} in $\bS$,
or simply an {\em antichain},
we mean
  a set of coding nodes $A\sse \bS$ such that
no node in $A$ extends any other node in $
A$.
Since we will be working within strong coding trees,
 all our antichains  will
have meet closures which are skew.
If $A$ is an antichain,
then   the {\em tree induced by $A$}
is  the set
\begin{equation}
\{s\re |u|:s\in A\mathrm{\ and\ } u\in \cl(A)\}.
\end{equation}
Given an finite antichain $A$,
let $\ell_A$ denote the maximum length of the nodes in $A$.
For a  level set $X$, we let $\ell_X$ denote the length of the nodes in $X$.

Recall Definitions \ref{defn.linked},
\ref{defn.mls}, and \ref{defn.witness}.
 In terms of graphs,
 $s$ and $t$ are base-linked if for each coding node $c_m\contains s$ and for each coding node $c_n\contains t$,
the least $i$ such that $v_m\, E\, v_i$ equals the least $i$ such that $v_n\, E\, v_i$.
This means that  $s$ and $t$ code  first edges with a common vertex in $\mathcal{H}_3$.
 Given a subset $A\sse\bS$, we say that $A$ has a {\em new linked pair} at level $\ell$ if and only if there is a pair of nodes $\{s,t\}\sse A\re (\ell+1)$
 such that $s(\ell)=t(\ell)=1$,
$s(\ell')$ and $t(\ell')$ are never  both $1$ for any
 $\ell'<\ell$,
 and for all $u\in (A\re (\ell+1))\setminus \{s,t\}$,
 $u(\ell)=0$.
 We call $\{s,t\}$ an  {\em essential  pair} for level $\ell$.
 Notice
that
for  a base-linked pair, the minimal $\ell$ for which $s(\ell)=t(\ell)=1$ satisfies $\ell\le |s\wedge t|$; so
  by definition, an essential pair  is not base-linked.
 Thus, linked pairs are either base-linked or essential, and never both.

\begin{defn}[Essential Pair Similarity]\label{defn.p-sim}
Suppose $S$ and $T$ are meet-closed sets, and
let $\lgl k_i:i<M\rgl$ and $\lgl \ell_i:i<N\rgl$ enumerate the levels of new  essential linked pairs in $S$ and $T$, respectively.
(This excludes levels of new mutually linked sets of size greater than two.)
We say that a map $f:S\ra T$
 is an {\em essential pair similarity map}  (or {\em ep-similarity}) if and only if
 $f$  is a strong similarity map and
  additionally the following   hold:
$M=N$,
and for each $i<M$,
letting $d$ be the  critical node in $S$  with  least length greater than $k_i$,
if
 $\{s_0,s_1\}\sse S\re |d|$
 is the essential pair for level $k_i$,
 then
$\{f(s_0),f(s_1)\}$ is the essential pair in $T\re|f(d)|$ for  level $\ell_i$.

Given finite antichains of coding nodes $A,B$ in a strong coding tree,
we say that $A$ and $B$ are {\em essential pair similar} (or {\em ep-similar})
if and only if
$A$ and $B$ are strongly similar,
and the strong similarity map from $\cl(A)$ to $\cl(B)$ is an  essential pair similarity.
When $A$ and $B$ are ep-similar, we write $A\psim B$.
\end{defn}

\begin{rem}
Strict similarity  was the  structural property characterizing  our upper bounds for big Ramsey degrees  in  \cite{DobrinenJML20}.
As we are not using strict similarity directly in this paper, we refer the reader to
Definition 8.3 in \cite{DobrinenJML20}.
We point out that  strict similarity implies ep-similarity, and not vice versa.
That is, ep-similarity is a courser equivalence relation than strict similarity.
The improvement of our upper bounds in this paper is due to constructing a coding tree $S$ which
represents a copy of the triangle-free Henson graph and in which every two ep-similar antichains are actually strictly similar.
This will clear away all superfluous strict similarity types, leaving us with an exact characterization of the big Ramsey degrees.
\end{rem}

 By applying Theorem \ref{thm.MillikenIPOC} finitely many times, using the methods  in \cite{DobrinenJML20} from Sections 7--8
 but substituting
  a {\em canonically linked}  subtree $S$
  (see Definition \ref{defn.canonlinked})
   in place of the incremental strong coding subtree in Lemma 7.5,
  we arrive at the following improvement of Theorem 8.9 from \cite{DobrinenJML20}.

\begin{thm}[Ramsey Theorem for Essential Pair Similar Antichains]\label{thm.simple}
Given a  strong coding tree $T$ and  a finite triangle-free graph  $\bG$, suppose
$h$ colors all  antichains $A$ of coding nodes in $T$
representing a copy of $\bG$ into
 finitely colors.
Then there is a  canonically linked coding tree $S\sse T$
such that all
ep-similar  antichains
of coding nodes in $S$
  have the same $h$-color.
\end{thm}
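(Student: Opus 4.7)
The plan is to mimic the envelope-and-Milliken argument from Sections 7--8 of \cite{DobrinenJML20}, replacing the incremental strong coding tree used there by a canonically linked coding tree $S\sse T$, whose existence (to be established in Subsection \ref{subsec.clct}) rigidifies which base-linked versus essential configurations can appear. First I would fix an enumeration $A_0,\dots,A_{r-1}$ of representatives of the ep-similarity types of antichains of coding nodes in $\bS$ representing $\bG$. To each $A_i$ I attach an \emph{envelope} $E(A_i)$: a finite subtree satisfying the \SPOC\ that contains $A_i$ and whose strong similarity type encodes not only the meet structure and passing numbers at $A_i$, but also, for each new essential linked pair $\{s_0,s_1\}$ within $\cl(A_i)$, a designated coding node (lying at or just above the level where $s_0$ and $s_1$ first show a common $1$) that witnesses this pair in the sense of Definition \ref{defn.witness}. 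This is exactly the extra piece of data needed so that two antichains are ep-similar if and only if their envelopes are strongly similar subtrees with the \SPOC.

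Next I would iterate Theorem \ref{thm.MillikenIPOC} finitely many times, once per similarity type of envelope $E(A_i)$, in the manner of the proof of Theorem 8.9 of \cite{DobrinenJML20}. Starting from the canonically linked coding tree $T_{-1}=S_0$ (with $S_0\sse T$ obtained from the canonical linking construction below), each iteration produces a further canonically linked coding subtree $T_i\le T_{i-1}$ inside which every strongly similar copy of $E(A_i)$ satisfying the \SPOC\ has a fixed $h$-color, where here $h$ is pulled back to envelopes by coloring $E$ with the color $h$ gives to the unique antichain $A\sse E$ that is ep-similar to $A_i$. The fact that the canonically linked property is preserved under passing to such a subtree is the analogue of the preservation of incrementality in Section 7 of \cite{DobrinenJML20}, and follows from the fact that canonical linkedness is a local, level-by-level condition stable under pruning to any strong coding subtree.

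Finally, let $S=T_{r-1}$. Given any antichain $A\sse S$ of coding nodes representing $\bG$, $A$ is ep-similar to some unique $A_i$. The envelope $E(A)$ computed inside $S$ is then strongly similar to $E(A_i)$ and still satisfies the \SPOC, so by the $i$-th application of Theorem \ref{thm.MillikenIPOC} the envelope color is constant on all such $E(A)\sse S$; since $A$ is recoverable from $E(A)$ (as the designated coding nodes of maximal length), the original $h$-color of $A$ is determined by the ep-similarity type of $A$.

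The main obstacle is not the Milliken iteration itself, which is essentially bookkeeping on top of Theorem \ref{thm.MillikenIPOC}, but rather justifying the key structural claim that inside a canonically linked tree $S$ \emph{ep-similarity coincides with strict similarity on the envelopes}. Without this, two ep-similar antichains could a priori have non-strictly-similar envelopes, and the finite-iteration argument would over-refine the coloring (reproducing only the older bounds of \cite{DobrinenJML20}). Proving this coincidence is precisely the work of Subsections \ref{subsec.clct} and \ref{subsec.newD}, where the canonically linked property is defined so that the placement and witnessing of essential versus base-linked pairs is forced, eliminating the extraneous strict similarity types and collapsing the envelope count down to $|\Simp(\bG)|$.
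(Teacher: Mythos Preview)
Your overall strategy---envelopes, finitely many applications of Theorem \ref{thm.MillikenIPOC}, and the key structural fact that inside a canonically linked tree ep-similarity coincides with strict similarity---is exactly the paper's approach, and your identification of that last point as the crux is correct (it is Observation \ref{obs.clpersists}).

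There is, however, a genuine order-of-operations problem in your outline. You propose to start from a canonically linked $S_0\sse T$ and then iterate Theorem \ref{thm.MillikenIPOC} \emph{inside} $S_0$, obtaining canonically linked $T_i\le T_{i-1}$. But Theorem \ref{thm.MillikenIPOC} takes a \emph{strong coding tree} as input and returns a strong coding subtree, and the paper explicitly remarks (just after Observation \ref{obs.clpersists}) that a canonically linked tree \emph{cannot} be a strong coding tree in the sense of \cite{DobrinenJML20}: the canonical completions force leftmost extensions to add new linked sets, violating the strong coding tree axioms. So you cannot feed $S_0$ to Theorem \ref{thm.MillikenIPOC} as stated. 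The paper is careful about this: it notes that one \emph{could} redo all of \cite{DobrinenJML20} for canonically linked trees, but deliberately avoids doing so.

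The fix, and what the paper actually does, is to reverse the order. One iterates Theorem \ref{thm.MillikenIPOC} in the ambient strong coding tree $T$ over the finitely many envelope strong-similarity types, obtaining a strong coding subtree $T'\le T$ in which every \SPOC\ copy of each envelope is monochromatic. \emph{Then} one applies Lemma \ref{lem.squiggliesttree} to $T'$ to obtain a canonically linked $S\sse T'$ together with witnessing nodes $W\sse T'$. For any antichain $A\sse S$, the envelope $A\cup W_A$ lives in $T'$ and satisfies the \SPOC, so its color is determined by its strong similarity type; and since inside $S$ ep-similarity equals strict similarity, ep-similar antichains yield strongly similar envelopes and hence the same $h$-color. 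This is precisely ``replacing the uses of Lemma 7.5 of \cite{DobrinenJML20} with Lemma \ref{lem.squiggliesttree} in the proof of Theorem 8.9.''
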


\begin{defn}[$\Simp(\bG)$]\label{defn.SimpG}
Given  a finite triangle-free graph $\bG$,
let
$\Simp(\bG)$ denote
a set of representatives from
the  different ep-similar equivalence classes of  strongly skew  antichains $A$
 representing  $\bG$
with the property that
 any coding node $c_n^A$ in $A$ with passing number $0$ at another coding node $c_m^A$ (where $m<n$)
 is linked with $c_m^A$.
 \end{defn}

The work of
 Section 9 of \cite{DobrinenJML20}
 with a small but important modification made in Subsection \ref{subsec.newD}
  yields the following  theorem.

\begin{thm}[Improved Upper Bounds]\label{thm.partitionthm}
Let  $\bG$ be a finite triangle-free graph, and
 let $f$ color all the copies of $\bG$ in $\mathcal{H}_3$ into finitely many colors.
Then there is a subgraph $\mathcal{H}'$ of $\mathcal{H}_3$, which is isomorphic to $\mathcal{H}_3$, such that
$f$ takes no more than $|\Simp(\bG)|$-many colors in $\mathcal{H}'$.
Hence,
\begin{equation}
T(\bG,\mathcal{H}_3)\le |\Simp(\bG)|.
\end{equation}
\end{thm}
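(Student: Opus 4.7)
The plan is to deduce the bound directly from Theorem \ref{thm.simple} once the structural role of the canonically linked property is unpacked. I would begin by fixing a strong coding tree $T\sse\bS$, as supplied by \cite{DobrinenJML20}. Copies of $\bG$ inside the copy of $\mathcal{H}_3$ represented by $T$ are in one-to-one correspondence with strongly skew antichains $A$ of coding nodes of $T$ whose pairwise passing numbers realize the edge relation of $\bG$. Pulling $f$ back along this correspondence gives a finite-valued coloring of such antichains. Applying Theorem \ref{thm.simple} produces a canonically linked coding subtree $S\sse T$ in which all ep-similar antichains of coding nodes representing $\bG$ receive the same color; let $\mathcal{H}'$ denote the copy of $\mathcal{H}_3$ coded by the coding nodes of $S$.

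The crucial observation, extracted from the definition of canonically linked (Definition \ref{defn.canonlinked}), is the following: for any two coding nodes $c_m^A,c_n^A$ in $S$ with $m<n$ whose passing number is $0$---meaning they code a non-edge---there exists some $\ell<|c_m^A|$ at which both $c_m^A$ and $c_n^A$ carry the bit $1$; in other words, the two nodes are linked in the sense of Definition \ref{defn.linked}. Consequently, any antichain $A$ of coding nodes in $S$ representing $\bG$ automatically satisfies the linking requirement built into the definition of $\Simp(\bG)$, so the ep-similarity type of $A$ coincides with that of some representative in $\Simp(\bG)$.

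Putting these pieces together: the number of colors $f$ attains on ${\mathcal{H}'\choose\bG}$ is bounded above by the number of distinct ep-similarity types of antichains of coding nodes in $S$ representing $\bG$, which by the preceding paragraph is at most $|\Simp(\bG)|$. This yields the desired inequality $T(\bG,\mathcal{H}_3)\le|\Simp(\bG)|$.

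The main obstacle in this approach is not the deduction above but establishing Theorem \ref{thm.simple}: one must construct a canonically linked subtree inside an arbitrary strong coding tree while simultaneously running a fusion-style Ramsey argument on ep-similarity types of antichains. This requires adapting the inductive construction of Sections 7--9 of \cite{DobrinenJML20}, replacing each invocation of strict similarity with the coarser ep-similarity and substituting the canonically linked subtree developed in Subsection \ref{subsec.clct} for the incremental strong coding subtree of Lemma~7.5 of \cite{DobrinenJML20}. Once that substitution has been validated, the deduction above is essentially a one-page corollary.
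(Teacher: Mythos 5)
There is a genuine gap, and it is located exactly at what you call the ``crucial observation.'' It is not true that in a canonically linked coding tree $S$ every pair of coding nodes coding a non-edge is linked. Canonical linkedness (Definition \ref{defn.canonlinked}) only governs how canonical completions are inserted after a new linked pair or after a splitting node among nodes that are already base-linked or already have passing number $1$ at a coding node; it never retroactively creates parallel $1$'s between a coding node and nodes it is unlinked with. Concretely, since the coding nodes of $S$ are dense in $S$ and property (3) of Definition \ref{def.sct} forces the all-$0$'s type to be realized at every level, there are coding nodes $c^S_n$ extending a node of $0^{<\om}$ of length above $|c^S_m|$; such a pair codes a non-edge (passing number $0$) yet is unlinked. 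Hence, for any $\bG$ with a non-edge, $S$ contains antichains representing $\bG$ whose ep-similarity type is \emph{not} represented in $\Simp(\bG)$ (ep-similarity distinguishes linked from unlinked non-edge pairs), so applying Theorem \ref{thm.simple} to $S$ and taking $\mathcal{H}'$ to be the copy of $\mathcal{H}_3$ coded by all of $S$ only bounds the number of colors by the number of ep-similarity types of \emph{all} antichains coding $\bG$, which is strictly larger than $|\Simp(\bG)|$ in general.

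The missing step is precisely the content of Subsection \ref{subsec.newD}: after obtaining the canonically linked tree $S$ from Theorem \ref{thm.simple}, one must pass further to the antichain $\bD\sse S$ of Lemma \ref{lem.newbD}, whose coding nodes code a copy of $\mathcal{H}_3$ and are constructed so that whenever $c^{\bD}_n$ has passing number $0$ at $c^{\bD}_m$ ($m<n$), the two nodes are deliberately linked during the construction. Taking $\mathcal{H}'$ to be the copy of $\mathcal{H}_3$ represented by $\bD$ (not by $S$), every antichain of coding nodes of $\bD$ representing $\bG$ is ep-similar to a representative in $\Simp(\bG)$, and the color count drops to $|\Simp(\bG)|$; the paper then runs the argument of Theorem 9.2 of \cite{DobrinenJML20} with Theorem \ref{thm.simple} and Lemma \ref{lem.newbD} substituted for Theorem 8.9 and Lemma 9.1. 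Your outline correctly identifies Theorem \ref{thm.simple} as one main ingredient, but without Lemma \ref{lem.newbD} the final inequality $T(\bG,\mathcal{H}_3)\le|\Simp(\bG)|$ does not follow.
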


We will prove that
 $T(\bG,\mathcal{H}_3)= |\Simp(\bG)|$
 in Section 4.

%%%%%%%%%%%%%%%%%%%%
%%%%%%%%%%%%%%%%%%%%

\subsection{Canonically linked coding trees}\label{subsec.clct}

In this subsection, we
improve the main result of Section 7 in \cite{DobrinenJML20}.
We will show  in Lemma \ref{lem.squiggliesttree}
that in any strong coding tree $T$, there is a  {\em canonically linked}
(see Definition \ref{defn.canonlinked}) coding
subtree $S$  and a subset $W$ of witnessing coding nodes with the following property:
Given any antichain  $A$ of coding nodes in $S$, there is a set of coding nodes $W_A$ in $W$ so that
$A\cup W_A$ has the \SPOC.
The canonical linked-ness of $S$
serves to  get rid of  most the superfluous strict similarity types  which remained in  our upper bounds in \cite{DobrinenJML20}:
All   ep-similar antichains in a
canonically linked coding tree  are strictly similar.
The remaining few  superfluous strict similarity types will be  eradicated by our construction of the antichain
$\bD$ in Lemma \ref{lem.newbD}.

For two subsets $X,Y$ of some level set $Z$,
we  say that $X$ is {\em lexicographically less than} $Y$, and write $X<_{\mathrm{lex}} Y$,
 if and only if,
letting $\lgl x_i:i<m\rgl$ and $\lgl y_i:i<n\rgl$ be the lexicographically increasing enumerations of $X$ and $Y$, respectively,
then  either
(a)
$x_i<_{\mathrm{lex}} y_i$
for the $i$ least such that $x_i\ne y_i$,
or (b)
$\lgl x_i:i<m\rgl$ is an initial segment of  $\lgl y_i:i<n\rgl$.
The following is Definition 7.1 in \cite{DobrinenJML20}, where it was called {\em Incremental  Parallel $1$'s}.

\begin{defn}[Incremental  Linked Sets]\label{defn.incrementalpo}
Let $Z$ be a  finite
 subtree of a strong coding tree $T$,
 and let $\lgl \ell_j:j<\tilde{j}\rgl$
list in increasing order the minimal lengths
of new mutually linked sets in $Z$.
We say that
$Z$ has {\em incremental   linked sets}
 if
the following holds.
For each $j<\tilde{j}$ for which
\begin{equation}
Z_{\ell_j,1}:=\{z\re (\ell_{j+1}):z\in Z, \ |z|>\ell_j,\mathrm{\  and \ } z(\ell_j)=1\}
\end{equation}
has size at least three,
letting $m$  denote the length of the longest critical node in $Z$ below $\ell_j$,
for each
proper subset $Y\subsetneq Z_{\ell_j,1}$  of cardinality at least two,
there is a $j'<j$ such that  $\ell_{j'}>m$,
$Y_{\ell_{j'},1}:=\{y\re(\ell_{j'}+1):y\in Y$ and $y(\ell_{j'})=1\}$ has the same size as $Y$,
and
$Y_{\ell_{j'},1}=Z_{\ell_{j'},1}$.

We shall say that an infinite  tree $S$ has {\em incremental linked sets} if for  each
$\ell<\om$, the
initial subtree $S\re \ell$ of $S$
 has incremental  linked sets.
\end{defn}

 We shall use canonical completions to construct an incremental coding tree with a minimal number of ep-similarity types.

\begin{defn}[Canonical Completion of  a Linked Pair]\label{defn.canoncomplete}
Suppose $A$ is a subtree of a strong coding tree $T$ and suppose
$X=A\re(\ell+1)$   has a  linked pair at $\ell$.
We call a level set  $Y$ end-extending $X$  in $A$
 a  {\em canonical completion} of $X$ if and only if
 there is no splitting or coding node in $A$ in the interval $[\ell,\ell_Y]$ and the following hold:

Let $\lgl M_i:i<\tilde{i}\rgl$  enumerate
 all maximal pairwise linked sets in $X$.
 List  all subsets  of size three from all  $M_i$, $i<\tilde{i}$,   in
 lexicographic order as $\lgl P_{3,j}:j<k_3\rgl$.
 Then  list in lexicographic order all subsets of size four from all
 $M_i$, $i<\tilde{i}$, as $\lgl P_{4,j}:j<k_4\rgl$,
 etc., until all the sets $M_i$
 appear as  $P_{|M_i|, j}$ for some $j<k_{|M_i|}$.
 Let  $\tilde{m}=\max(|M_i|:i<\tilde{i})$.
 Then for each $3\le i\le\tilde{m}$ and $j<k_i$,
 there is a level $\ell' \in (\ell,\ell_Y)$ such that
 $Y_{\ell',1}$ is  mutually linked set end-extending $P_{i,j}$.
 Moreover, letting $ \ell_{i,j}$ be the least level above $\ell'$ where
  $Y_{\ell',1}$ is a mutual linked set end-extending $P_{i,j}$,
 the sequence $\lgl \ell_{3,j}:j<k_3\rgl{}^{\frown}\dots^{\frown}
 \lgl \ell_{\tilde{m},j}:j<k_{\tilde{m}}\rgl$ is an increasing sequence.
\end{defn}

%Go back and clean this definition.

Thus, a canonical completion incrementally adds linked sets of the same size   in lexicographic order,
and then repeats this process  for sets of the next largest size  until it completes this process up to a new  mutually linked set
  for each new maximal pairwise linked set.
One can think of this as supersaturating the tree with all new  linked sets  in a canonical manner which will not negatively affect branching capabilities.
By this, we mean that whenever $X$ is a pairwise linked set, given any $s$ and $t$ in $X$ and any coding nodes $c_m,c_n$  ($m<n$) extending $s,t$, respectively,
then $c_n$ must have passing number $0$ at $c_n$.
Thus, adding new mutually linked sets among a pairwise linked set does not affect the ability of the tree to code a copy of $\mathcal{H}_3$.
Note that if a linked pair is not included in any larger pairwise linked set,
then that pair is its own canonical completion;
 no other linked pairs need be added.

The  following     refines the notion of incremental linked sets.
This is the fundamental notion behind the canonical partitions, which provide  exact big Ramsey degrees.

\begin{defn}[Canonically Linked]\label{defn.canonlinked}
Let $A$ be a
 subtree of a strong coding tree $T$,
 and let $\lgl c^A_n:n<N\rgl$ enumerate the coding nodes in $A$.
We say that
$A$ is {\em canonically linked}
 if for each $n<N$,
the following holds.
Let $\ell_*=0$ if $n=0$; otherwise, let $\ell_*=\ell^A_{n-1}+1$.

\begin{enumerate}

\item
For each splitting node $s$ in $A$  in the interval
$[\ell_*,\ell^A_n)$,
the  minimal new mutually linked set
 in  $A$ above $|s|$
is a
pair of nodes $s_0,s_1$
extending $s^{\frown}0,s^{\frown}1$, respectively, such that
$s_0(\ell)=s_1(\ell)=1$ for some $\ell$.
Moreover, above this $\ell$, there is a canonical completion
 before any new splitting node or any other new linked pairs occur.

\item
Once we have performed the canonical completion on the maximal splitting node in $A$ below $c_n^A$,
let $\lgl Q_q:q<\tilde{q}\rgl$
enumerate in lexicographic order all
 pairs of nodes in $A_{\ell^A_n,1}$.
 Add a  linked pair for $Q_0$ and then perform the canonical completion.
Then add a  linked pair for $Q_1$ and then perform the canonical completion.
And so on until $Q_{\tilde{q}-1}$ has been taken care of.
After this, extend to the level of $c^A_n$.
\end{enumerate}
\end{defn}

In particular, whenever a new linked  pair   occurs,
then the canonical completion occurs before any other critical node or other new  linked pair   occurs.

 \begin{observation}\label{obs.clpersists}
Any subtree of a canonically linked coding tree is again canonically linked.
Moreover, for any two antichains $A,B$ of coding nodes in  a canonically linked coding tree,
$A$ and $B$ are strictly similar if and only if $A\psim B$.
\end{observation}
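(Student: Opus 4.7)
The observation has two parts: (a) any subtree of a canonically linked coding tree is canonically linked, and (b) in such a tree, ep-similarity and strict similarity coincide on antichains of coding nodes. I would handle these separately, arguing directly from Definition \ref{defn.canonlinked}.

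For (a), let $T$ be canonically linked and let $S \sse T$ be a subtree. The splitting nodes and coding nodes of $S$ are subsets of those of $T$, and $S$ inherits the relative order of levels, meets, and passing numbers from $T$. In particular, above each splitting node $s$ of $S$, any minimal new linked pair appearing in $S$ is also a minimal new linked pair in $T$, since nothing in $T \setminus S$ can produce a new linked pair of $S$. Because in $T$ this pair is followed by a canonical completion without any intervening critical node, the same holds in $S$; and the lexicographic enumerations $\langle P_{i,j} \rangle$ that drive the canonical completion are preserved under restriction. A parallel inspection of clause (2), using the enumeration of pairs in $A_{\ell^A_n,1}$, finishes this part.

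For (b), the direction ``strictly similar $\Longrightarrow$ ep-similar'' is immediate, since ep-similarity asks only for preservation of the order of new essential pairs, a weakening of the data preserved by strict similarity. For the converse, let $A,B$ be antichains of coding nodes in a canonically linked coding tree and $f : \cl(A) \to \cl(B)$ an ep-similarity. I must promote the preservation of the order of essential pairs to the order of all new mutually linked sets. The key structural fact is that in a canonically linked tree, every mutually linked set of size $\ge 3$ appears only as part of the canonical completion triggered by an essential pair: when an essential pair $\{s_0,s_1\}$ appears at level $\ell$, the subsequent levels up to the next critical node list, in a fixed lexicographic order, all larger pairwise linked subsets of the maximal pairwise linked sets extending $\{s_0,s_1\}$. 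Since $f$ is a strong similarity, it preserves lexicographic order, passing numbers at coding nodes, and the positions of essential pairs; hence the canonical completion sequence above each essential pair in $A$ is sent to the canonical completion sequence above its image in $B$. Inducting on essential pairs, $f$ preserves the order of appearance of every new mutually linked set, which is exactly strict similarity.

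The main obstacle is a bookkeeping point inside part (b): one must verify that in a canonically linked tree, no mutually linked set of size $\ge 3$ can appear outside of a canonical completion, and that the canonical completion is uniquely determined by the essential pair that triggers it. Both facts follow from clauses (1) and (2) of Definition \ref{defn.canonlinked} together with the \SPOC, which forces each new mutually linked set to be witnessed immediately by a coding node and leaves no room for spurious linked sets between canonical completions. Once this is in place, the induction on essential pairs promotes the ep-similarity to a strict similarity, completing the proof.
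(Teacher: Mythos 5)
Your argument is correct and takes essentially the paper's route: the paper states this as an observation without proof, its justification (made explicit in the text following Lemma \ref{lem.squiggliesttree}) being exactly your key point that in a canonically linked tree every new mutually linked set arises only through the canonical completion triggered, in a fixed lexicographic order, by a new linked pair, so that strong similarity plus preservation of the order of essential pairs already forces preservation of the order of all new mutually linked sets, i.e.\ strict similarity. The one blemish is your appeal to the \SPOC, which a canonically linked coding tree need not satisfy (its witnessing coding nodes lie in the set $W\sse T$ outside $S$, and such a tree is not a strong coding tree); this is harmless, since clauses (1) and (2) of Definition \ref{defn.canonlinked}, which you also cite, already give that no mutually linked set of size at least three appears outside a canonical completion.
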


\begin{rem}
 A  canonically linked  tree cannot be  a strong coding tree in the sense of the definition given in \cite{DobrinenJML20}.
 This is because we stipulated that strong coding trees have the property that taking leftmost extensions never adds a new linked set, whereas in canonically linked trees, any new maximal pairwise linked set  $X$ will be
 followed by a new mutually linked set  end-extending $X$.
 However, this does not affect the availability of passing numbers needed to construct subcopies of $\mathcal{H}_3$, for if $X$ is pairwise linked, then
 whenever one node in $X$ is extended to a coding node $c$, any other node extending a node in $X$ must have passing number $0$ at $c$.
Thus,  adding the  mutually linked set end-extending $X$ does not affect the ability to extend to a subtree coding a copy of $\mathcal{H}_3$.

In retrospect, we could have used {\em maximally linked} trees  from the outset in \cite{DobrinenJML20},
where instead of adding the canonical completion to each new linked pair, we simply
add a mutually linked set   immediately  above each new maximal pairwise linked set.
All proofs in that paper could be modified to hold
 using such coding trees.
However, the way we defined  strong coding trees,
it is possible that within a given strong coding tree $T$,
new linked pairs might occur before we could add a mutually linked set.
 Thus,  for the sake of logic,
  we shall use what has been proved there, rather than
 rehashing all those proofs or
asking the reader to  believe  without proof  that
the work in  \cite{DobrinenJML20} holds if we replace strong coding trees with  canonically linked or maximally linked coding trees.
\end{rem}

 By a {\em canonically linked coding tree}, we mean a strongly skew canonically linked coding  subtree $S$ of $\bS$
 satisfying (1) and (3) of Definition \ref{def.sct}.
Every strong coding tree contains a canonically linked  coding subtree, as we shall show below.
The following is a revised version of   Definition 7.4 in \cite{DobrinenJML20}.
Given a node $w\in 2^{<\om}$, we let $w^{\wedge}$ denote the maximal initial segment of $w$ which is a sequence of $0$'s.
We say that  $W\sse T$ is a {\em set of witnessing coding  nodes} for a
canonically linked coding tree  $S\sse T$
if and only if
each new  mutually linked set  $X\sse S$
 is witnessed by a coding node $w\in W$ such that
$|w^{\wedge}|$ is less than the level of $X$ and $w$ is linked with no member of $S$.

The next  Lemma  says that  inside any strong coding tree $T$,  we can construct a canonically linked coding subtree and a set of canonical witnessing coding nodes.
 This  improves
 Lemma 7.5 in \cite{DobrinenJML20}.

\begin{lem}[Canonically linked coding tree]\label{lem.squiggliesttree}
Let $T$ be a strong coding tree.
Then there is a canonically linked  coding tree $S\sse T$ and a set of  witnessing coding nodes $W\sse T$ such that  each
 new mutually  linked set in $S$ is
 witnessed in $T$ by  a coding node in $W$.
\end{lem}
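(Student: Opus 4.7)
The plan is to adapt the proof of Lemma~7.5 in \cite{DobrinenJML20}, which produces an incremental strong coding subtree of $T$, by strengthening the recursive construction so that each new linked pair appearing in the subtree is immediately followed by a full canonical completion in the sense of Definition~\ref{defn.canoncomplete}, with no other critical node of $S$ permitted in between. We shall simultaneously build $S$ and a set $W\sse T$ of witnessing coding nodes by recursion on the coding nodes of $S$.

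Concretely, initialize $S_{-1}=\{\emptyset\}$ and $W_{-1}=\emptyset$, and suppose at stage $n$ we have a finite canonically linked subtree $S_n\sse T$ ending at its $n$-th coding node $c^S_n$, together with a finite $W_n\sse T$ such that each new mutually linked set in $S_n$ is witnessed by some $w\in W_n$ with $|w^{\wedge}|$ below the linked-set level and $w$ linked with no node of $S_n$. To obtain $S_{n+1}$, we work inside $T$ and sequentially carry out: (a) choose a splitting node $s^{*}\in T$ above the terminal nodes of $S_n$, with every other terminal node of $S_n$ extended leftmost to the level of $s^{*}$; (b) find, above $s^{*}$, a level $\ell\in T$ and extensions $s_0,s_1$ of $(s^{*})^{\frown}0, (s^{*})^{\frown}1$ linked at $\ell$, with all other nodes extended leftmost, and use the \SPOC\ of $T$ to obtain a witness $w\in T$ just above $\ell$, which we add to $W$; (c) perform the canonical completion above the splitting node by repeatedly invoking the \POC\ of $T$ to produce, in the prescribed order, new mutually linked sets end-extending each subset of size $\geq 3$ of each maximal pairwise linked set, appending to $W$ the \SPOC-witness of each; (d) then for each pair in $A_{\ell^{S}_{n+1},1}$ taken in lexicographic order, introduce a new linked pair and perform its canonical completion, appending witnesses to $W$; (e) extend all terminal nodes leftmost up to the level of a suitably chosen coding node $c^T_k\in T$ extending $c^S_n$'s intended continuation, and designate this as $c^S_{n+1}$.

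The construction preserves the inductive hypothesis because of three key features of $T$: strong skew-ness ensures leftmost extensions of non-participating nodes append zeros and therefore introduce no spurious links; density of splitting and coding nodes in $T$ guarantees the existence of the required $s^{*}$ and $c^T_k$; and the \SPOC\ of $T$ guarantees, at each new mutually linked set $X$ of our subtree, the existence of a coding node $w\in T$ whose length is immediately above $\ell_X$, with no intervening critical node of $T$ (hence none of $S$), and with $w$ linked only with nodes of $X$. Since the coding nodes of $S$ are placed only at the levels of certain coding nodes of $T$ strictly after each block of witnessing nodes, and non-participating nodes continue to extend leftmost past $|w|$, each $w\in W$ remains linked with no node of $S$, as required.

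The main obstacle is choreographing the two interleaving streams of events — those we wish to place in $S$ (one splitting node followed by its canonical completion, then lexicographically ordered canonical completions for each new linked pair at the level of the next coding node of $S$) and those forced on us by the critical nodes of $T$ — so that the order prescribed by Definition~\ref{defn.canonlinked} is respected and no critical node of $T$ is promoted to a critical node of $S$ at the wrong moment. This is handled by finitely iterated applications of the \SPOC\ of $T$ combined with consistent leftmost extension of non-participating nodes; the bookkeeping is routine but long. Once the construction is complete, $S$ is by design canonically linked, is strongly skew, satisfies the coding-node density and passing-number completeness conditions (1) and (3) of Definition~\ref{def.sct} (inherited from $T$), and its set of coding nodes represents a copy of $\mathcal{H}_3$; and $W$ satisfies the claimed witnessing property by construction.
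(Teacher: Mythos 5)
Your outline follows the paper's strategy (recursively thin $T$, extend non-participating nodes leftmost, insert a new linked pair and then its canonical completion in lexicographic order, collect witnesses, then place the next coding node), but there is a genuine gap at the coding-node stage, which is exactly where the paper has to work hardest. In step (e) you extend all terminal nodes leftmost to the level of a ``suitably chosen'' coding node $c^T_k$ and declare it $c^S_{n+1}$, and you assert that conditions (1) and (3) of Definition \ref{def.sct} are ``inherited from $T$.'' Neither claim is justified: these properties are not inherited by arbitrary subtrees, and leftmost extension gives every non-participating node passing number $0$ at $c^T_k$, whereas for $S$ to represent $\mathcal{H}_3$ in the same order and to realize the one-to-one correspondence with $1$-types at level $\ell^S_{n+1}+1$, a prescribed set of nodes (your $A_{\ell^S_{n+1},1}$, which itself must be determined in advance, e.g.\ by mirroring $T_{\ell,1}$ as in the paper) must arrive at $c^S_{n+1}$ with passing number $1$. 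The paper secures this by constructing $S$ level-by-level along the critical nodes of $T$ so that $S$ is strongly similar to $T$, and by invoking Lemma 4.18 of \cite{DobrinenJML20} to extend the last level set to the level of a coding node while preserving passing types; your proposal contains no analogue of this step, and without it the conclusion that $S$ is a canonically linked \emph{coding} tree (in particular that its coding nodes are dense and code a copy of $\mathcal{H}_3$) does not follow.

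Two smaller points in the same direction. First, the \POC\ and \SPOC\ are witnessing properties of sets of parallel $1$'s that already exist; they do not ``produce'' new mutually linked sets end-extending prescribed subsets, so steps (b)--(d) need the extension machinery of \cite{DobrinenJML20} (together with the observation, made after Definition \ref{defn.canoncomplete}, that only subsets of maximal pairwise linked sets can be safely end-extended to mutually linked sets without obstructing the coding of $\mathcal{H}_3$), and the witnesses with $|w^{\wedge}|$ below the linked level and $w$ unlinked with $S$ come from the construction in Lemma 7.5 of \cite{DobrinenJML20}, not from the \SPOC\ directly. Second, organizing the recursion as one splitting node per coding node of $S$ cannot give the branching required by condition (3), since between consecutive coding-node levels the number of $1$-types, and hence the number of splitting nodes needed, generally grows by more than one; the paper avoids this bookkeeping problem by following the critical-node sequence of $T$ itself.
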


\begin{proof}
Let  $\lgl d^T_m:m<\om\rgl$ denote the critical nodes in $T$ in order of  increasing length.
Let
$\lgl m_n:n<\om\rgl$ denote the indices
such that $d^T_{m_n}=c^T_n$, so that  the $m_n$-th critical node in $T$ is the $n$-th coding node in $T$,
and let $T(m)$ denote  the level set $T\re |d^T_m|$.
We shall construct a canonically linked subtree $S$ of $T$ so that $S$ is strongly similar to $T$.
We will let $d^S_m$ denote the $m$-th critical node of $S$, and  $S(m)=S\re |d^S_m|$.

Since $T$  is a strongly skew tree coding
$\mathcal{H}_3$,
$d^T_0$  and $d^T_1$ are  splitting nodes of $T$ which are members of $0^{<\om}$ and  $d^T_2=c^T_0$ is a coding node (hence not in $0^{<\om}$).
Moreover, $\bigcup_{m<3}T(m)$ has no essential linked pairs, so it is canonically linked.
Thus, we let $d^S_m=d^T_m$,  $S(m)=T(m)$,
and $W_m=\emptyset$,
for all $m<3$.

Given   $m\ge 3$, suppose  we have chosen
 $S(k)$ for all $k<m$ so that $\bigcup_{k<m}S(k)$ is
  canonically linked and strongly similar to
  $\bigcup_{k<m}T(k)$
  Moreover,
  suppose we have chosen a  level set  extension $S(m-1)^+\sse T$  of $S(m-1)$
  which is a canonical completion of  $S(m-1)$
   and a set of  coding nodes $W_m\sse T$
   witnessing each new linked set in the canonical completion.
   We also suppose that
  $S(m-1)^+$ has no  predetermined new linked sets in $T$.
  (This was called {\em no predetermined new parallel $1$'s} in \cite{DobrinenJML20}.
  It means that it is possible to extend
  the level set $S(m-1)^+$ in $T$ without adding any new linked sets.)
  Let $T^+(m-1)$   denote the set of immediate successors  $T(m-1)$
   in $\widehat{T}$, and
  let $f:T(m-1)^+\ra S(m-1)^+$ be the lexicographic-preserving bijection between these two level sets.
  Let $t_*$ be the node in $T(m-1)^+$ such that $t_*\sse d^T_m$,
and let   $s_*=f(t_*)$.

Suppose first that $d^T_m$ is a splitting node.
Take  $d^S_m$  to  be
 any splitting node in $T$ extending $s_*$, and extend all other nodes in $S(m-1)^+$  along the leftmost paths in $T$ to the same length as $d^S_m$.
 These nodes comprise the level set $S(m)$.
 Now take the set $\BL_S(d^S_m)$ of all nodes in
 $S(m)$ which are base-linked with $d^S_m$.
 Let $Y$ be  a level set  in $T$ end-extending $\BL_S(d^S_m)$  such that $Y$ is
  a canonical completion of  $\BL_S(d^S_m)$.
As the canonical completion $Y$ is being constructed,  take $W_m$ to be a set of witnessing coding nodes in $T$, similar to the construction in Lemma   7.5 in \cite{DobrinenJML20}.
 Finally, extend all nodes in $S(m)\setminus\BL_S(d^S_m)$
 leftmost in $T$ to nodes of the length in $Y$,
 and let $S(m)^+$ be the union of $Y$ along with these nodes.
 Then $S(m)^+$ has no predetermined new linked sets, and all of its linked sets occur either in $S(m)$ or $Y$.

Now suppose that $d^T_m$ is a coding node.
Let  $\ell$ denote $|d^T_m|$, and recall that
 $T_{\ell,1}$ denotes the collection of nodes in
 $T\re (\ell+1)$ which have passing number $1$ at $d^T_m$.
Let $Y$ denote  the set of nodes in $T(m-1)^+$ which extend to a node in $T_{\ell,1}$,
and let $Z=f[Y]$.
Let
$\lgl P_{j}:j<\tilde{j}\rgl$ enumerate in lexicographic order the collection of pairs of members in $Z$.

Extend the nodes in $P_0$ to a linked pair in $T$ of the same length, say $P_0'$.
Let $X_0$ be the union of $P_0'$  along with leftmost extensions in $T$ of all other nodes in $S(m-1)^+$
to the same length.
Look at all  the maximal pairwise linked sets in $X_0$, and
 take $Y_0$ to be a level set end-extending $X_0$ in $T$ such that
$Y_0$ is a canonical completion of $X_0$.
As this canonical completion is being formed,
add new witnessing coding nodes into the set $W_m$
 (similar to the construction in Lemma   7.5 in \cite{DobrinenJML20}).
In general, for  $j<\tilde{j}-1$, given $Y_j$,
let $P'_{j+1}$ be the set of nodes in $Y_j$ extending the nodes in $P_{j+1}$.
Let $X_{j+1}$ be an end-extension of $Y_j$ in $T$
which adds a linked pair above
$P_{j+1}'$.
Then perform the
 canonical completion of $X_{j+1}$
to obtain a level set extension $Y_{j+1}$  in $T$
while adding coding nodes to the set $W_m$ to witness each new linked set.
At the end of this process, we have a level set $Y_{\tilde{j}-1}$.
By Lemma
4.18 in \cite{DobrinenJML20},
we can extend $Y_{\tilde{j}-1}$ to a level set $S(m)$ in $T$ so that the coding node in $S(m)$ extends $s_*$,
and the lexicographic-preserving map from $T(m)$ to $S(m)$ preserves the passing types at the coding node in this level.
It follows that $\bigcup_{k\le m}T(k)$ is strongly similar to $\bigcup_{k\le m}S(k)$.

To finish, let $S=\bigcup_{m<\om} S(m)$ and let $W=\bigcup_{m<\om}W_m$.
Then $S$ is strongly similar to $T$ (and hence, represents a copy of $\mathcal{H}_3$), $S$ is canonically linked, and
all new linked sets  in $S$ are  witnessed
by coding nodes in $W$.
\end{proof}

It follows from the construction in the previous lemma that  for each antichain $A\sse S$,
there is a set of coding nodes $W_A\sse W$ such that
$A\cup W_A$ satisfies the \SPOC.
In particular, we can choose $W_A$ so that   $A\cup W_A$   will be an
{\em envelope} of $A$ (in the terminology of Section 8 of \cite{DobrinenJML20}).

Observe that   for a  canonically linked coding tree $S$, whenever  $Y,Y'$ are level sets in $S$ with
$Y'$ end-extending $Y$,
 then
 $Y'$ has no new mutually linked sets over $Y$ if and only if
 $Y'$ has no new  linked pairs.
Thus,  inside $S$, the notion of strict similarity
(Definition 8.3 in \cite{DobrinenJML20})
reduces  simply to   essential pair   similarity.
By replacing
the uses of Lemma 7.5 of \cite{DobrinenJML20} with Lemma \ref{lem.squiggliesttree} in the proof
of Theorem 8.9 in \cite{DobrinenJML20},
we obtain  Theorem \ref{thm.simple}.

%The referee might want us to add the proof.

%%%%%%%%%%%%%%%%%%%%%%
%%%%%%%%%%%%%%%%%%%%%%
%%%%%%%%%%%%%%%%%%%%%%

\subsection{Improved antichain of coding nodes $\bD$ representing $\mathcal{H}_3$}\label{subsec.newD}

In  Lemma 9.1 in \cite{DobrinenJML20}, we showed that within any strong coding tree, there is an antichain $\bD$ of coding nodes which represent a copy of the triangle-free Henson graph.
For the proof of Theorem  \ref{thm.partitionthm}, we will need to make a slight modification to this  construction
in order to sweep away the  remaining superfluous ep-similarity types.
We do this by linking any two coding nodes in the antichain where the longer one has passing number $0$ at the shorter coding node.

\begin{lem}\label{lem.newbD}
Let $S$ be a  canonically incremental coding tree.
Then there is an infinite  antichain of coding nodes  $\bD\sse S$  which code
 $\mathcal{H}_3$ in exactly the same way that $S$ does with the following additional property:
Whenever $m<n$ and $c^{\bD}_n$ has passing type $0$ at $c^{\bD}_m$,
 then $c^{\bD}_m$ and $c^{\bD}_n$ are linked.
\end{lem}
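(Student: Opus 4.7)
The plan is to carry out the recursive construction of the antichain $\bD$ given in Lemma~9.1 of \cite{DobrinenJML20}, with additional bookkeeping at each stage to enforce the linking property. Fix the enumeration requirements (i) and (ii) from Subsection~\ref{subsec.bS}; we shall inductively choose coding nodes $c^{\bD}_n = c^S_{i_n}$ in $S$ (where $c^S_k$ denotes the $k$-th coding node of $S$ in order of length) so that the enumeration $\langle w_n : n<\omega\rangle$ of $\mathcal{H}_3$ defined by letting $w_n$ be the vertex represented by $c^{\bD}_n$ satisfies (i) and (ii), and so that whenever $m<n$ and $c^{\bD}_n$ passes with $0$ at $c^{\bD}_m$, the nodes $c^{\bD}_m$ and $c^{\bD}_n$ are linked in $S$.

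At stage $n$, assume $c^{\bD}_0 = c^S_{i_0},\dots,c^{\bD}_{n-1} = c^S_{i_{n-1}}$ have been chosen, and let $E_n = \{m<n : w_n \not E w_m\}$ be the set of non-neighbors of $w_n$ dictated by the desired adjacency pattern (with (i) and (ii) imposing obligatory edges, and the remaining adjacencies chosen freely subject to triangle-freeness). For each $m \in E_n$, because $w_m$ and $w_n$ are non-adjacent in $\mathcal{H}_3$, the extension property of the \Fraisse\ limit supplies infinitely many common neighbors $v_j$ of $w_m$ and $w_n$ in $\mathcal{H}_3$; each such $v_j$ is the vertex represented by a coding node $c^S_j$ of $S$ at which both $c^{\bD}_m$ and $c^{\bD}_n$ must pass with $1$. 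By selecting a common-neighbor witness $v_{j(m)}$ with $j(m) < i_m$ we obtain the linking $c^{\bD}_m(|c^S_{j(m)}|) = c^{\bD}_n(|c^S_{j(m)}|) = 1$ automatically. We then pick $c^{\bD}_n = c^S_{i_n}$ with $i_n > i_{n-1}$ to be a coding node of $S$ whose associated vertex realizes the prescribed adjacencies to $w_0,\dots,w_{n-1}$ and is adjacent to each chosen $v_{j(m)}$; such a coding node exists because the constraints determine a triangle-free configuration (the candidate edges $w_n v_{j(m)}$ create only paths $w_m - v_{j(m)} - w_n$, not triangles, since $m \in E_n$), and $S$ codes $\mathcal{H}_3$ faithfully.

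The main obstacle is the foresight required when choosing each $c^{\bD}_m$: to ensure that, for every future stage $n' > m$ with $m \in E_{n'}$, there remains an available common neighbor of $w_m$ and $w_{n'}$ at an index $< i_m$, we must inductively pick $i_m$ large enough that $\{j < i_m : v_j \text{ is a neighbor of } w_m\}$ contains witnesses for all possible future non-adjacency patterns. This is achievable because the coding nodes of $S$ are dense in $S$ (Definition~\ref{def.sct}(1)) and the canonically linked structure of $S$ (provided by Lemma~\ref{lem.squiggliesttree}) arranges new linked sets through canonical completions, yielding a rich supply of common-neighbor candidates below any prescribed level. After choosing witnesses $v_{j(m)}$ compatibly for all $m \in E_n$, one more application of the extension property delivers $c^{\bD}_n$. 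The resulting antichain $\bD \subseteq S$ codes $\mathcal{H}_3$ in the same way that $S$ does and satisfies the required linking property.
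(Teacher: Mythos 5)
Your chronology is the reverse of the paper's, and that reversal is where the proposal breaks. The paper builds $\bD$ level by level inside $S$ and installs the required links at the stage when the \emph{shorter} node of a future pair is created: in its Case III, before the length of the new coding node $c^{\bD}_{n+1}$ is finalized, each of the finitely many nodes at that level which will have passing number $0$ at $c^{\bD}_{n+1}$ is extended, together with the branch that will become $c^{\bD}_{n+1}$, so that the two acquire a common $1$ at a fresh level below the (still unfixed) length $|c^{\bD}_{n+1}|$; since a $1$ persists to all extensions, every later coding node with passing number $0$ at $c^{\bD}_{n+1}$ inherits the link automatically. You instead fix $c^{\bD}_m=c^S_{i_m}$ first and then, at each later stage $n$, look for a common-neighbor witness coded \emph{below} the already-fixed level $|c^{\bD}_m|$ (your condition $j(m)<i_m$). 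This is exactly the step you flag as the ``main obstacle'' and dispose of by ``picking $i_m$ large enough,'' and it is a genuine gap: no choice of $i_m$ can do what you need. The set $N_m$ of neighbors of $w_m$ represented below level $|c^{\bD}_m|$ is finite and frozen the moment $c^{\bD}_m$ is chosen, and by triangle-freeness a single witness can serve only a pairwise non-adjacent family of later vertices $w_n$ (two vertices sharing that common neighbor cannot be joined by an edge). So your scheme forces the non-neighbors of $w_m$ among $\{w_n:n>m\}$ to be covered by at most $|N_m|$ independent sets of the graph being built; but every finite triangle-free graph embeds among the non-neighbors of a vertex of $\mathcal{H}_3$, and finite triangle-free graphs have unbounded chromatic number, so no finite reservoir of witnesses below a pre-fixed level can accommodate all future non-adjacency patterns. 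Density of the coding nodes and the canonical linking of $S$, which you invoke, cannot help: they do not enlarge a set that was fixed at stage $m$. This is precisely why the paper inserts the linking levels, one fresh level per branch, \emph{before} committing to the length of the new coding node, rather than hunting for witnesses below an already-committed length at later stages.

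Two smaller points. First, ``codes $\mathcal{H}_3$ in exactly the same way that $S$ does'' means the map $c^S_n\mapsto c^{\bD}_n$ must preserve passing numbers, i.e.\ $c^{\bD}_n(|c^{\bD}_m|)=c^S_n(|c^S_m|)$ for all $m<n$; choosing the remaining adjacencies ``freely subject to triangle-freeness,'' even respecting (i) and (ii), neither guarantees this nor by itself guarantees that the coded graph is a Henson graph. Second, the coding nodes of $S$ need not form an antichain, so the requirement that no later $c^{\bD}_{n'}$ extend an earlier $c^{\bD}_n$ must be arranged explicitly in your selection of the $i_n$; your proposal never addresses this, whereas in the paper's level-by-level construction it is automatic because each new coding node is placed on a branch disjoint from the earlier coding nodes.
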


\begin{proof}
We will  construct
an antichain of coding nodes
 $\bD\sse S$ which codes a copy of $\mathcal{H}_3$ in the same order as $S$.
 It is important to notice that, while taking leftmost extensions in $S$ can add new mutually linked sets (indeed this is the point of being canonically linked),
 it will never add a new essential linked pair.
Thus,  leftmost extensions of any unlinked pair in $S$ yield another unlinked pair in $S$.

We use the notation
$\lgl c^S_n:n<\om\rgl$ to denote the $n$-th coding node of $S$,  and $\ell^S_n$ to denote $|c^S_n|$.
We let
$\lgl d^S_m:m<\om\rgl$  denote the critical nodes (coding and splitting) of $S$, and $m_n$ denote the index such that $d^S_{m_n}$ equals the coding node $d^S_n$.
Likewise, we will use
$c^{\bD}_n$ to denote the $n$-th coding node in $\bD$, and $\ell^{\bD}_n$ to denote its length.
The set of  nodes in  $\bD\setminus\{c^{\bD}_n\}$ of length  $\ell^{\bD}_n$
 shall be indexed  as $\{d_s:s\in S\re l^S_n\}$.
We will construct $\bD$ so that  for each $n$,
the node of length $l^{\bD}_{n}+1$ which  is going to be extended to the next  coding node $c^{\bD}_{n+1}$ will  split at a  level lower than
 any of the other nodes of length $l^{\bD}_{n+1}$  split in $\bD$.

%To aid in the construction, for each $m<\om$,
%we will let $X_m$ denote the level set $S\re(|d^S_{m+1}-1)$ and build a level set $Y_m$ end-extending $\bD\re |d^{\bD}_m+1| so that the lexicographic-preserving map $\psi_m:X_m\ra Y_m$ ???

Define  $d_{0}^{\bD}=  d^S_0$, the root of $S$,
 and
let $\bD(0)=\{d_{0}^{\bD}\}$, the $0$-th level of $\bD$.
As the node $d^S_0$ splits in $S$,
so also the node $d^{\bD}_0$ will split in $\bD$.
Let $Y_0$ denote the set of the two immediate successors
$d_{0}^{\bD}$ in $\widehat{S}$.

For the induction step,
suppose $m\ge 1$ and we have constructed
$\bigcup_{k<m}\bD(k)\sse S$ so that it is  ep-similar to $\bigcup_{k<m}S(k)$.
Let $n$ be the index of the longest coding node in $S$ in $\bigcup_{k<m}S(k)$.
We have three cases:
\vskip.1in

\noindent{\bf Case I.} $d^S_m$ is a splitting node and $d^S_{m-1}$ is the coding node $c^S_n$.
\vskip.1in

Define  $X$  to be the set of immediate successors in $\widehat{S}$ of the level set $S(m-1)$, and define $Y$   to be the set of immediate successors in $\widehat{S}$ of the level set
$D(m-1)\setminus\{c^{\bD}_n\}$, respectively.
Let $\psi:X\ra Y$ be the lexicographic preserving bijection.
Define  $s_*$ to be the node in $X$ which extends to the splitting node $d^S_m$, and let $t_*=\psi(s_*)$.
Let $x_*$ be the node in $X$ which extends to the next coding node in $S$, and let $y_*$ denote $\psi(x_*)$.
Note that $s_*$ and $x_*$ are distinct, because every splitting node in $S$ has an extension with passing number $1$ at the next coding node, while every
 node which is
base-linked with $x_*$ must have passing number $0$ at the next coding node, so does not  extend to a splitting node in this interval.

First extend $y_*$ to a splitting node $y'_*$ in $S$. Then let $Y'$ be the level set of nodes of length
$|y'_*|+1$
consisting of  ${y'_*}^{\frown}0$ and  ${y'_*}^{\frown}1$ as well as leftmost extensions in $S$ of the nodes in $Y\setminus\{y_*\}$
to the length  $|y'_*|+1$.
After this, extend the node in $Y'$ extending $t_*$ to a splitting node in $S$, and label this splitting node $d^{\bD}_m$.
Then let
$\bD(m)$ consist of the node  $d^{\bD}_m$ along with leftmost extensions of the nodes in
$Y'\setminus \{d^{\bD}_m\}$ in $S$.
Note that $\bD(m)$ has one more node than $S(m)$, precisely the node extending ${y'_*}^{\frown}1$; label this node $e_{n+1}$.
This is the node that will be extended to the coding node $c^{\bD}_{n+1}$.
This  construction adds no new linked pairs over $\bD\re (\ell_n^{bD}+1)$.
\vskip.1in

\noindent{\bf Case II.} $d^S_m$ and $d^S_{m-1}$  are both splitting nodes.
\vskip.1in

Let $e$ denote the node in $D(m-1)$ extending $e_{n+1}$.
Define  $X$  to be the set of immediate successors in $\widehat{S}$ of the level set $S(m-1)$, and define $Y$   to be the set of immediate successors in $\widehat{S}$ of the level set
$D(m-1)\setminus\{e\}$, respectively.
Let $\psi:X\ra Y$ be the lexicographic preserving bijection.
As in Case I, let
  $s_*$ be the node in $X$ which extends to the splitting node $d^S_m$, and let $t_*=\psi(s_*)$.
  Then extend $t_*$ to a splitting node in $S$ and label it $d^{\bD}_m$.
  Let $\bD(m)$ be the collection of the
   leftmost extensions in $S$ of the nodes in $Y\setminus \{e\}$ along with $d^{\bD}_m$.
\vskip.1in

\noindent{\bf Case III.} $d^S_m$ is a coding node.
\vskip.1in

In this case, $d^S_m=c^S_{n+1}$, and $d^S_{m-1}$ is a splitting node.
Let $X$ denote the set of immediate successors of $S(m-1)$ in $\widehat{S}$.
Let $e$ denote the node in $\bD(m-1)$ extending $e_{n+1}$,
and let $Y$ denote the set of immediate successors of
$\bD(m-1)\setminus\{e\}$ in $\widehat{S}$.
Let $\psi$ be the  lexicographic preserving bijection from $X$ to $Y$.

As a preparatory step, let $c$ be a coding node in $S$ extending $e$ of long enough length   that
there is a level set extension $Y'$ of $Y$ in
$\widehat{S}$ of length $|c|$ such that
the lexicographic preserving map from $S(m)$ to $Y'$ preserves passing numbers  at the coding node at these levels.
Since $S$ is canonically linked, this automatically is inherited by  $Y'$; that is, $Y'$ is canonically linked.

Now, we  will extend $c$ along with  the nodes in $Y'$ to construct $\bD(m)$
 so that for each non-coding node $t\in\bD(m)$ with passing number $0$ at $c^{\bD}_m$,
 $t$ is linked with $c^{\bD}_m$.
Let $Y'_0$ denote those nodes in $Y'$ which have passing number $0$ at $c$, and let $\lgl y_j:j<\tilde{j}\}$ be the enumeration of $Y'_0$ in
 lexicographic order.
Take $z_0$ extending $y_0$ and $u_0$ extending $c$ in $S$ so that $z_0$ and $u_0$ are linked.
Given $z_j$ and $u_j$, where $j<\tilde{j}-1$,
take $z_{j+1}$ extending $y_{j+1}$ and $u_{j+1}$ extending $u_j$ in $S$ so that $z_{j+1}$ and $u_{j+1}$ are linked.
After this process is complete,
let $Y''$ be the  level set of the leftmost extensions
 of the nodes $\{z_j:j<\tilde{j}\}$ to the length of $z_{\tilde{j}-1}$.
Then extend $Y''\cup\{u_{\tilde{j}-1}\}$ to a level set $\bD(m)$ so that the node in $\bD(m)$ extending  $u_{\tilde{j}-1}$ is a coding node,  label it $c^{\bD}_{n+1}$,
and the
lexicographic preserving map from $S(m)$ to $\bD(m)\setminus\{c^{\bD}_{n+1}\}$ preserves passing numbers.
\vskip.1in

This concludes the construction of $\bD$ satisfying the Lemma.
\end{proof}

Recall Definition \ref{defn.SimpG}, where $\Simp(\bG)$ was defined.
Notice that
for any antichain $A\sse\bD$,
$A$ is ep-similar to a representative in
 $\Simp(\bG)$.
Replacing the uses of Theorem 8.9  and Lemma 9.1 of \cite{DobrinenJML20}
 with  applications of Theorem \ref{thm.simple} and Lemma \ref{lem.newbD} in
 the proof of
Theorem 9.2 in \cite{DobrinenJML20}
yields  our Theorem \ref{thm.partitionthm}.

In the next section, we will prove that each of the ep-similarity types in
 $\Simp(\bG)$ persist in any subcopy of $\mathcal{H}_3$ contained in the one coded by $\bD$.
It will follow that  the big Ramsey degree
$T(\bG,\mathcal{G}_3)$ is exactly the cardinality of
  $\Simp(\bG)$.

%%%%%%%%%%%%%%%%%%%%%%
%%%%%%%%%%%%%%%%%%%%%%
%%%%%%%%%%%%%%%%%%%%%%
%%%%%%%%%%%%%%%%%%%%%%
%%%%%%%%%%%%%%%%%%%%%%
%%%%%%%%%%%%%%%%%%%%%%

\section{Canonical Partitions}\label{sec.brd}

In this section, we  prove that  for a given finite triangle-free graph $\bG$,
each of the types in $\Simp(\bG)$
 persists in every subcopy of $\mathcal{H}_3$.
 This produces canonical partitions of the copies of $\bG$ in $\mathcal{H}_3$,  characterizing the exact big Ramsey degree of $\bG$ in $\mathcal{H}_3$.

Fix a canonically linked  coding tree $S$ (recall Lemma \ref{lem.squiggliesttree}) and an
 antichain  of coding nodes $\bD\sse S$  such that $\bD$ represents a copy of $\mathcal{H}_3$.
(The construction of such a $\bD$ is done in Lemma 9.1 of \cite{DobrinenJML20}.)

\begin{thm}[Persistence]\label{thm.persistence}
Let  $D$ be any subset of $\bD$    representing  a copy of $\mathcal{H}_3$.
Given any antichain  of coding nodes $A\sse S$,
there is an essential pair  similarity embedding of $A$ into $D$.
It follows that every essential pair  similarity type of an antichain in $S$ persists in $D$.
\end{thm}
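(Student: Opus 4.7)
The plan is to construct an ep-similarity embedding $\phi$ of $A$ into $D$ by induction on the coding nodes of $A$, taken in order of increasing length. Enumerate these coding nodes as $c^A_0, c^A_1, \ldots, c^A_{n-1}$, and let $\bG$ be the finite triangle-free graph they represent. At each step $i$, I would select a coding node $\phi(c^A_i) \in D$ so that the partial image $\{\phi(c^A_0), \ldots, \phi(c^A_i)\}$ forms an ep-similar copy of the initial segment $\{c^A_0, \ldots, c^A_i\}$; at the end of the induction the full map $\phi$ is the desired embedding.

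\medskip

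The inductive step is the heart of the argument. Given $\phi(c^A_0), \ldots, \phi(c^A_{i-1})$ already fixed, the new node $\phi(c^A_i) \in D$ must satisfy three kinds of constraints: (a) its passing types at each $\phi(c^A_j)$ ($j < i$) must match the passing types of $c^A_i$ at the corresponding $c^A_j$, so that the graph coded by the image agrees with $\bG$; (b) the relative order of the heights $|\phi(c^A_i) \wedge \phi(c^A_j)|$ must agree with that of the heights $|c^A_i \wedge c^A_j|$, so that the meet-tree structure (including the placement of splitting nodes) is preserved; and (c) every new essential linked pair formed between $\phi(c^A_i)$ and some earlier $\phi(c^A_j)$ must appear at a level whose position among all essential linked pair events matches that prescribed by $A$.

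\medskip

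Constraint (a) is handled by the extension property of $\mathcal{H}_3$: because $D$ codes a copy of $\mathcal{H}_3$, there are infinitely many coding nodes in $D$ realizing the required one-point triangle-free extension type over the finite graph coded by $\{\phi(c^A_0), \ldots, \phi(c^A_{i-1})\}$. Constraint (b) is then achieved by selecting, from among these infinitely many candidates, one whose branching in $S$ meets each previous $\phi(c^A_j)$ at a relative height dictated by $A$'s meet structure; the rich splitting structure of the canonically linked coding tree $S$ makes such a selection possible. Constraint (c) is where the new ingredients come in: Lemma \ref{lem.newbD} ensures that every non-edge between coding nodes of $D \subseteq \bD$ corresponds to an \emph{essentially} linked pair rather than a base-linked one, while the canonical linkedness of $S$ forces each new essential linked pair to be immediately followed by its canonical completion before any further critical event can occur. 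Consequently, once the passing types and meet heights of (a) and (b) are controlled, the order of the new essential linked pair formations above $\phi(c^A_{i-1})$ is rigidly determined, and it matches the corresponding order in $A$'s induced tree.

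\medskip

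The principal obstacle is verifying that constraints (a) and (b) can be met \emph{simultaneously}---essentially a statement of \emph{free splitting} inside $D$. This amounts to an extension lemma: for any prescribed one-point triangle-free extension type over the graph coded by $\{\phi(c^A_0), \ldots, \phi(c^A_{i-1})\}$ and any prescribed relative meet pattern of a new coding node with the $\phi(c^A_j)$'s, some coding node in $D$ realizes both. Such a lemma follows from the extension property of $\mathcal{H}_3$ combined with the strongly skew, canonically linked structure of $S$ and the construction of $\bD$ in Lemma \ref{lem.newbD}, but formulating it cleanly requires care. Once this extension lemma is in place, constraint (c) is handled automatically by the canonical linkedness of $S$, and the inductive construction yields the desired embedding $\phi$.
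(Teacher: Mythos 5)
Your plan has a genuine gap at exactly the point you flag as ``the principal obstacle,'' and that obstacle is in fact the entire content of the theorem. The appeal to the extension property of $\mathcal{H}_3$ only guarantees coding nodes of $D$ realizing a prescribed $1$-type over the finitely many vertices already chosen, i.e.\ it controls passing numbers at the levels of the chosen images $\phi(c^A_j)$. But ep-similarity is governed by passing numbers at \emph{arbitrary} intermediate levels of $\bD$: the level at which two image nodes first become linked (and hence the order in which new essential linked pairs appear) is not determined by the graph $1$-type, and different candidates in $D$ realizing the same $1$-type can first link with earlier images at wildly different levels. Since $D$ is an arbitrary subcopy over which you have no control, your constraint (c) is not ``rigidly determined'' by canonical linkedness of $S$ or by Lemma \ref{lem.newbD}; those facts constrain $S$ and $\bD$, not which candidate inside $D$ you can pick. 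So the claimed ``extension lemma'' (simultaneously realizing the $1$-type, the meet pattern, and the schedule of first links inside $D$) is precisely what must be proved, and nothing in your sketch supplies it.

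The paper's proof is devoted to exactly this. It uses the passing-number-preserving bijection $\varphi:\bC\to D$, introduces the largeness notions ($s$-large, $0$-large, $s$-$0$-large subsets of $\overline{D}$), and proves a chain of lemmas (Lemmas \ref{lem.Lemma1} through \ref{lem.LemmaB}) showing that large sets can be extended level by level while preserving precisely the prescribed linked/unlinked pattern --- in particular that unlinked pairs can be kept unlinked, using cofinality of $\varphi^{-1}$-preimages to find coding nodes forcing the images to code an edge. The embedding is then built not node-by-node on the coding nodes of $A$, but by recursion on the critical nodes of the envelope $B=\cl(A\cup W_A)$ (splitting nodes and witnessing coding nodes included), maintaining at each stage a fiber system $\{s_t:t\in T_k\re N_k\}$ with $\widehat{t}$ being $s_t$-large and with $B\re M_k$, $T_k\re N_k$, and the fibers all having the same linked pairs; this invariant is what makes the order of new essential pairs in the image match that of $A$. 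Your induction on coding nodes of $A$ alone also cannot pin down the placement of meets and of the essential pairs relative to coding levels, which is why the paper passes to $B$. To repair your argument you would essentially have to reconstruct this largeness machinery, so as written the proposal does not constitute a proof.
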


\begin{proof}
Let $\bC$ denote $\{c_n:n<\om\}$, the set of all coding nodes in $\bS$.
Fix an antichain of coding nodes $D\sse\bD$ representing   $\mathcal{H}_3$.
Without loss of generality, we may assume that $D$ represents $\mathcal{H}_3$ in the same order that $\bS$ does.
Let $\{c^D_n:n<\om\}$ enumerate the coding nodes in $D$, in order of increasing length.
Then the map
$\varphi:\bC\ra D$ via
$\varphi(c_n)=c^{D}_n$
is {\em passing number preserving},
meaning that
whenever $m<n$,  then
\begin{equation}
\varphi(c_n)(|\varphi(c_m)|)=c_n(|c_m|).
\end{equation}

 Define
 \begin{equation}
 \overline{D}
 =\{c^{D}_n\re |c^{D}_m|: m\le n<\om\}.
 \end{equation}
 Then $\overline{D}$ is a union of level sets, but not meet-closed.
 We extend the map $\varphi$ to a map $\bar{\varphi}:\bS\ra\overline{D}$ as follows:
 Given $s\in \bS$, let $n$ be least such that $c_n\contains s$, and let  $m$ be the integer such that $|s|=|c_m|$,
  and define $\bar{\varphi}(s)=\varphi(c_n)\re|\varphi(c_m)|$;
  that is, $\bar{\varphi}(s)=c^{D}_n\re|c^{D}_m|$.
Notice that
$\bar{\varphi}$ is  again passing number preserving:
Given  $s,t\in \bS$ with $|s|=|c_m|<|t|$,
and  given $n$ least such that $c_n\contains t$,
we have
\begin{equation}
\bar{\varphi}(t)(|\bar{\varphi}(s)|)
=\varphi(c_n)(|\varphi(c_m)|)
=c^D_n(|c^D_m|)
=c_n(|c_m|)
=t(|s|).
\end{equation}

\begin{observation}\label{obs.Fact1}
If $m<n$ and $c_n(|c_m|)=1$,
then $c^D_n(|c^D_m|)=1$; hence $c_n^D$ and $c_m^D$ have no parallel $1$'s.
\end{observation}

In what  follows, for $s\in\bS$,  we let
$\widehat{s}$ denote the cone of all $s'\in \bS$ extending $s$.
A subset $X\sse\widehat{s}$  is {\em cofinal} in $\widehat{s}$ if for each $s'\contains s$ in $\bS$, there is an $x\in X$ such that $x\contains s'$.
For $t\in\overline{D}$, $\widehat{t}$ denotes the set of all $t'\in\overline{D}$ extending $t$.
Since $\varphi$ is a map from $\bC$ onto $D$, it follows that
 for a subset $L\sse\overline{D}$,
$\varphi^{-1}[L]$ is the set of coding nodes $c\in\bC$ such that $\varphi(c)\in L$.
We work with subsets $L$ of $\overline{D}$ rather than just of $D$ because we shall be interested in cones above members of $\overline{D}$, and allowing this flexible notation will reduce the need for
extra symbols throughout.

\begin{defn}\label{def.large}
Let $L$ be a subset of $\overline{D}$.
Given  $s\in\bS$, we say that $L$ is {\em $s$-large} if and only if $\varphi^{-1}[L]\cap\widehat{s}$ is cofinal in
$\widehat{s}$.
We say that $L$ is
{\em large} if and only if there is some
$s\in\bS$ for which $L$ is $s$-large.

We say that $L$ is {\em $0$-large} if and only if $L$ is $s$-large for  some $s\in 0^{<\om}$.
Call $s'$ a {\em $0$-extension of $s$} if and only if  $\contains s$ and  for each $|s|\le i<|s'|$, $s'(i)=0$.
We say that   $L$ is  {\em $s$-$0$-large}
 if and only if $L$ is $s'$-large for some $0$-extension $s'$ of $s$.
\end{defn}

The  next  observation
follows immediately from the definitions.

\begin{observation}\label{obs.Fact4}
If $L\sse\overline{D}$ is $s$-large, then $L$ is $s'$-large for every $s'$  extending $s$.
In particular,  $L$ is $s'$-large  for each $s'$ which $0$-extends $s$, which implies that
 $L$ is $s$-$0$-large.
\end{observation}

The following series of lemmas will aid in building
an ep-similarity copy of a given antichain from $S$
 inside of $D$.

\begin{lem}\label{lem.Fact2}
Suppose $t$ is in $\overline{D}$  and
  $\widehat{t}$ is $0$-large.
  Then
$\varphi^{-1}[\,\widehat{t}\,]$ contains a copy of
  $\mathcal{H}_3$, and
  $t$ is in  $0^{<\om}$.
\end{lem}

\begin{proof}
Suppose
 $\widehat{t}$ is $0$-large.
 Then
  there is some $s\in 0^{<\om}$ such that
$\varphi^{-1}[\,\widehat{t}\,]\cap\widehat{s}$ is cofinal in $\widehat{s}$.
Since $s$ is in $0^{<\om}$,  the set of coding nodes in
$\widehat{s}$ represents a graph which contains a copy of $\mathcal{H}_3$.
In particular,
$\varphi^{-1}[\,\widehat{t}\,]\cap\widehat{s}$ being a collection of coding nodes which is  cofinal in $\widehat{s}$
implies that this set contains
coding nodes
representing a copy of $\mathcal{H}_3$.
Since $\varphi$ is passing number preserving, it follows that $\widehat{t}$ contains a copy of $\mathcal{H}_3$.
This would be impossible if $t$ were not a sequence of $0$'s.
Therefore, $t\in 0^{<\om}$.
\end{proof}

\begin{lem}\label{lem.Lemma1}
If $L\sse\overline{D}$ is $s$-large  and $L=\bigcup_{i<n}L_i$ is a  partition of $L$ into finitely many pieces,
then there is an $i<n$ such that $L_i$ is $s$-$0$-large.
\end{lem}

\begin{proof}
Suppose that no $L_i$ is $s$-$0$-large.
Then there is an $s_0$ which $0$-extends $s$
such that
$\varphi^{-1}[L_0]\cap \widehat{s_0}=\emptyset$.
Given $i<n-1$ and $s_i$, a $0$-extension of $s$, since $L_{i+1}$ is not $s$-$0$-large, there is some $s_{i+1}$ which $0$-extends $s_i$ such that
$\varphi^{-1}[L_{i+1}]\cap \widehat{s_{i+1}}=\emptyset$.
In the end,
we obtain  $s_{n-1}\in\bS$
which $0$-extends $s$
such that for all $i<n$,
$\varphi^{-1}[L_{i}]\cap \widehat{s_{n-1}}=\emptyset$.
This contradicts that
$L$ is $s$-large.
\end{proof}

\begin{lem}\label{lem.Lemma2}
Suppose  $t$ is in $\overline{D}\re |c^D_m|$
and  $\widehat{t}$  is    $s$-large.
Let  $n\ge m$ be given satisfying   $|c_n|>|s|$,
    and
    let $\ell\ge |c^D_n|+1$ be given.
 For $i<2$, let
\begin{equation}
J_i=
\bigcup\{\widehat{u}:u\in \widehat{t}\re \ell\mathrm{\ and\ }
u(|c^D_{n}|)=i\}.
\end{equation}
Then  for each $i<2$,
$J_{i}$ is large.
Moreover, $J_0$ is $s$-$0$-large.
\end{lem}

\begin{proof}
Let $i<2$ be fixed, and
suppose towards a contradiction that
$J_i$ is not  large.
Fix any $s_0\contains s$
satisfying  $|s_0|>|c_n|$, $s_0(|c_n|)=i$, and
$|\bar{\varphi}(s_0)|>\ell$.
Since $J_i$ is not large,
there is some $s_1\contains s_0$ such that
 $\varphi^{-1}[J_i]\cap \widehat{s_1}=\emptyset$.
 Fix some coding node $c_k\in
 \varphi^{-1}[\,\widehat{t}\,]\cap \widehat{s_1}$.
 Such a coding node exists since
 $ \varphi^{-1}[\,\widehat{t}\,]$ is a  cofinal subset of
 $\bC\cap\widehat{s_1}$.
Note that
$c_k\contains s_0$ implies  $c_k(|c_n|)=i$,
and
$c_k\in \varphi^{-1}[\,\widehat{t}\,]$ implies that $c^D_k=\varphi(c_k)\contains t$.
 Therefore, $c^D_k$ is a member of $J_i$.
 Hence, $c_k$ is in $\varphi^{-1}[J_i]\cap \widehat{s_1}$,
 contradicting that this set is empty.
Thus, $J_i$ must be large.

Now suppose that $J_0$ is not $s$-$0$-large.
Similar to the above argument,
take any $s_0$  which $0$-extends $s$
such that   $|s_0|>|c_n|$, $s_0(|c_n|)=0$, and
$|\bar{\varphi}(s_0)|>\ell$.
 Since $J_0$ is not $s$-$0$-large,
there is some $0$-extension  $s_1$ of $s_0$
 such that
 $\varphi^{-1}[J_i]\cap \widehat{s}_1=\emptyset$.
 Now take some
 $c_k\in
 \varphi^{-1}[\,\widehat{t}\,]\cap \widehat{s}_1$.
This time,
 $c_k(|c_n|)=0$, since
$c_k\contains s_0$,
and  again,
$c_k\in \varphi^{-1}[\,\widehat{t}\,]$ implies that $c^D_k=\varphi(c_k)\contains t$.
 Therefore, $c^D_k$ is a member of $J_0$, a contradiction.
Thus, $J_0$ must be  $s$-$0$-large.
\end{proof}

\begin{lem}\label{lem.Fact}
Suppose that $\widehat{t}$ is $s$-$0$-large and
$|t|<|\bar{\varphi}(s)|$.
Then for each $\ell>|\bar{\varphi}(s)|$,
there is an extension $u\contains t$  in $\widehat{D}$ with $|u|=\ell$ such that $\widehat{u}$ is $s$-$0$-large.
\end{lem}

\begin{proof}
Since $\widehat{t}$ is $s$-$0$-large, there is some
$0$-extension  $s_0$ of $s$  such that $\widehat{t}$ is    $s_0$-large. Let $n$ be any index such that $\ell>|c^D_n|>|t|$.
Letting $L_=\{\widehat{u}:u\in\widehat{t}\re\ell\mathrm{\ and\ } u(|c^D_n|)=0\}$,
 Lemma \ref{lem.Lemma2} implies there is some $u\in \widehat{t}\re\ell$ such that
 $\widehat{u}$ is $s_0$-$0$-large.
 Since $s_0$ is a $0$-extension of $t$,
 $\widehat{u}$ is  again $s$-$0$-large.
\end{proof}

We shall say that a pair of nodes $s,t$  is {\em unlinked} if it is not linked; that is, if there is no $\ell$ such that $s(\ell)=t(\ell)=1$.

\begin{lem}\label{lem.LemmaA}
Suppose that $t_0,t_1\in \overline{D}$ are unlinked,
and assume also that
 $|t_0|=|t_1|$.
 Given
 $s_0,s_1\in\bS$  of the same length
such that for each $i<2$,
$\widehat{t_i}$ is $s_i$-$0$-large,
then
\begin{enumerate}
\item[(a)]
$s_0$ and $s_1$ are unlinked; and
\item[(b)]
For each $\ell>|t_i|$ there are  $u_i\in\widehat{t_i}\re\ell$ such that $u_0$ and $u_1$ are unlinked, and
there are   $0$-extensions $x_i\contains s_i$ such that $\widehat{u_i}$ is $x_i$-large.
It follows that
 $x_0$ and $x_1$ are unlinked.
\end{enumerate}
\end{lem}

\begin{proof}
Let $j$ be the integer such that $|s_0|=|s_1|=|c_j|$, and let $k$ be the integer such that $|t_0|=|t_1|=|c^D_k|$.
By Lemma \ref{lem.Fact}, we may assume that
 each $|t_i|\ge|\bar{\varphi}(s_i)|$,  and hence,  $j\le k$.
Let
$\ell>|c^D_k|$ be given.
Since for each $i<2$, $\widehat{t_i}$ is $s_i$-$0$-large,
it follows that   $\bigcup\{\widehat{u}:u\in\widehat{t_i}\re\ell\}$ is also $s_i$-$0$-large.
By Lemma \ref{lem.Lemma1}, we can
fix  some $u_i\in\widehat{t_i}\re\ell$ such that
$\widehat{u_i}$ is $s_i$-$0$-large.
Thus, there is an $x_i$ which  $0$-extends $s_i$ such that
$\widehat{u_i}$ is $x_i$-large.
By $0$-extending one of the $x_i$'s if necessary, we may assume that $|x_0|=|x_1|$.
Take  coding nodes $c_{n_i}\in\varphi^{-1}[\widehat{u_i}]\cap\widehat{x_i}$;
without loss of generality, say $n_0<n_1$.

Since  each $c^D_{n_i}\contains t_i$ and the pair $t_0,t_1$ is unlinked,
  for each $m<k$,
  at least one of
$c_{n_0}^D(|c^D_m|)$ and $c_{n_i}^D(|c^D_m|)$  equals zero.
Then since $\varphi$ is passing number preserving and $j\le k$,
we have that $c_{n_0}$ and $c_{n_1}$ are unlinked below $|c_j|$.
Since  for each $i<2$, $c_{n_i}$ extends $s_i$, it follows that $s_0$ and $s_1$ are unlinked.
(This uses the fact that every level of $\bS$ has a coding node.)
Thus, (a) holds.

To  finish proving (b),
since $x_0,x_1$ are unlinked at any $m$ in the interval $[j,|x_1|)$, and since by (a), they are unlinked at any $m<j$, it follows that $x_0$ and $x_1$ are unlinked.
Furthermore, $\varphi^{-1}[\widehat{u_i}]\cap\widehat{x_i}$
is cofinal in $\widehat{x_i}$.
Therefore, we can choose  the coding nodes $c_{n_i}\contains x_i$ to have the additional property that  $c_{n_1}(|c_{n_0}|)=1$.
Since $\varphi$ is passing number preserving,
it also holds that
$c^D_{n_1}(|c^D_{n_0}|)=1$.
Since $c^D_{n_i}=\varphi(c^D_{n_i})\contains u_i$,
it must be the case that $u_0$ and $u_1$ are unlinked.
Hence, (b) holds.
\end{proof}

\begin{lem}\label{lem.LemmaAPart2}
Suppose that
\begin{enumerate}
\item
 $t_0,t_1\in \overline{D}$ are of the same length and
are unlinked.
\item
 $s_0,s_1\in\bS$  are  of the same length.
\item
$\widehat{t_0}$ is $s_0$-$0$-large.
\item
$u_1\contains t_1$  and satisfies
$\widehat{u_1}$ is $x_1$-large, for some $x_1\contains s_1$.
\end{enumerate}

Then
\begin{enumerate}
\item[(a)]
$s_0$ and $s_1$ are unlinked; and
\item[(b)]
There is some   $u_0\in\widehat{t_0}\re |u_1|$
and a   $0$-extension $x_0\contains s_0$
such that
$\widehat{u_0}$ is $x_0$-large,
 $u_0$
 and $u_1$ are unlinked, and $x_0$ and $x_1$ are unlinked.
\end{enumerate}
\end{lem}

\begin{proof}
Let $j$ be the integer such that $|s_0|=|s_1|=|c_j|$, and let $k$ be the integer such that $|t_0|=|t_1|=|c^D_k|$.
Then   $|t_0|\ge|\bar{\varphi}(s_0)|$ implies that $j\le k$.
Since $\widehat{t_0}$ is $s_0$-$0$-large,
letting
$\ell=|u_1|$,
it follows that   $\bigcup\{\widehat{u}:u\in\widehat{t_0}\re\ell\}$ is also $s_0$-$0$-large.
By Lemma \ref{lem.Lemma1}, we can
fix  some $u_0\in\widehat{t_0}\re\ell$ such that
$\widehat{u_0}$ is $s_0$-$0$-large.
Thus, there is an $x_0$ which  $0$-extends $s_0$ such that
$\widehat{u_0}$ is $x_0$-large.
By $0$-extending one of the $x_i$'s if necessary, we may assume that $|x_0|=|x_1|$.
Take  coding nodes $c_{n_i}\in\varphi^{-1}[\widehat{u_i}]\cap\widehat{x_i}$;
without loss of generality, say $n_0<n_1$.

Since  each $c^D_{n_i}\contains t_i$ and the pair $t_0,t_1$ is unlinked,
  for each $m<k$,
  at least one of
$c_{n_0}^D(|c^D_m|)$ and $c_{n_i}^D(|c^D_m|)$  equals zero.
Then since $\varphi$ is passing number preserving and $j\le k$,
we have that $c_{n_0}$ and $c_{n_1}$ are unlinked below $|c_j|$.
Since each $c_{n_i}$ extends $s_i$, it follows that $s_0$ and $s_1$ are unlinked.
(This uses the fact that every level of $\bS$ has a coding node.)
Thus, (a) holds.

We took $u_0\in\widehat{t_0}$  and $x_0$ to be a $0$-extension of $s_0$ so that  $\widehat{u_0}$ is $x_0$-large.
So to
 finish proving (b), we just need to show that $u_0$ and $u_1$ are unlinked.
 It suffices to show that there are coding nodes $c^D_{n_i}\contains u_i$ with $|c^D_{n_0}|<|c^D_{n_1}|$ and
 $c^D_{n_1}(|c^D_{n_0}|)=1$.
 Since $x_0$ is a $0$-extension of $s_0$, and since  by part (a), $s_0$ and $s_1$ are unlinked,
 it follows that $x_0$ and $x_1$ are unlinked.
Furthermore,  each $\varphi^{-1}[\widehat{u_i}]\cap\widehat{x_i}$
is cofinal in $\widehat{x_i}$.
Therefore, we can choose  the coding nodes $c_{n_i}\contains x_i$ to have the additional property that
$c_{n_1}(|c_{n_0}|)=1$.
Since $\varphi$ is passing number preserving,
it also holds that
$c^D_{n_1}(|c^D_{n_0}|)=1$.
Since $c^D_{n_i}=\varphi(c^D_{n_i})\contains u_i$,
it must be the case that $u_0$ and $u_1$ are unlinked.
By the same reasoning as for $s_0$ and $s_1$, it follows that  $x_0$ and $x_1$ are unlinked.
Hence, (b) holds.
\end{proof}

The next lemma  follows from
 Lemma \ref{lem.LemmaA} and
the  fact that $\bD$ is  canonically linked.

\begin{lem}\label{lem.LemmaB}
Suppose
 $X=\{s_i:i<p\}$ is a level set in $\bS$  and
$Y=\{t_i:i<p\}$ is a level set in $\overline{D}$  such that
\begin{enumerate}
\item
For each $i<p$,
$\widehat{t_i}$ is $s_i$-$0$-large.
\item
For each $i<j<p$, $t_i$ and $t_j$ are linked if and only if $s_i$ and $s_j$ are linked.
\end{enumerate}
Then for each $\ell>|t_0|$,
for each $i<p$ there is some $u_i\contains t_i$ of length $\ell$ and there is some $x_i$  which $0$-extends $s_i$
such that
each $\widehat{u_i}$ is $x_i$-large,
and  all $x_i$ have the same length.
Moreover, the set $\{u_i:i<p\}$ has no new  linked pairs over $Y$.
\end{lem}

\begin{proof}
Let  $X=\{s_i:i<p\}$ be  a level set in $\bS$
and let $Y=\{t_i:i<p\}$ be a level set in $\overline{D}$  with $|\bar{\varphi}(s_0)|\le |t_0|$
satisfying assumptions (1) and (2).
Let $j\le k$ be given such that each $|s_i|=|c_j|$ and each
$|t_i|=|c^D_k|$.
Since $\bD$ is canonically linked and $Y$ is a subset of $\bD$,
it follows that  $Y$ is canonically linked.

By Lemma \ref{lem.LemmaA},
there are $x_i$ $0$-extending $s_i$, all of the same length,
and there are $u_i\contains t_i$ all of length $\ell$ such that
each $\widehat{u_i}$ is $x_i$-large and each pair
$\{u_i,u_j\}$ is linked only if $\{t_i,t_j\}$ is linked.
Thus,  the set $Y'=\{u_i:i<p\}$ has no new  linked sets over $Y$.
(This follows from $\bD$ being incrementally linked:
For $Y'$ has no new linked sets  over $Y$ if and only if
 $Y'$ has no new linked pairs.)
Since $X'=\{x_i:i<p\}$ is a level set of $0$-extensions of $X$, it has no new  linked sets  over $X$.
\end{proof}

For level sets $X$ and $Y$ with the same cardinality,
we  say that $X$ and $Y$ have the {\em same linked pairs}    if and only if for all $i<j<p$,
$\{s_i,s_j\}$ is linked iff $\{t_i,t_j\}$ is linked,
where $\lgl s_i:i<p\rgl$ and $\lgl t_i:i<p\rgl$ are the lexicographically increasing enumerations of $X$ and $Y$, respectively.

Let $A$ be an antichain of coding nodes in $S$.
Let $W_A$ be a  minimal subset  of  $W$ such that each new essential linked pair in $A$ is witnessed by a coding node in $W_A$,  and let $B$ denote the meet-closure of $A\cup W_A$.
By our construction of $W$, we may assume that
each new essential linked pair $\{s,t\}$ in $A$ is witnessed by the coding node  $c$ of least length in $B$ above the minimal level  $\ell$ such that $s(\ell)=t(\ell)=1$.
Moreover, this coding node $c$ is the minimal critical node in $B$ above $\ell$ and forms no linked pair with any other member of $B$, and $A$ has no other new linked sets in the interval $[\ell,|c|]$.
($B$ can be thought of as a minimalistic kind of envelope for $A$.
The witnessing coding nodes in $W_A$ are best thought of as place holders  to keep track of levels where new essential linked pairs appear.)
Let
$\lgl b_i:i<N\rgl$, where $N\le\om$, enumerate the nodes in $B$ in order of increasing length.

We will be using the map $\bar{\varphi}$ to
construct an ep-similarity map $f$  of $B$ as a subset of $\bS$ into $\overline{D}$ as follows:
For $k>0$,
let $M_k=|b_{k-1}|+1$.
Define $\widehat{D}$ to be the
tree of all initial segments of members of $D$;
thus,
$\widehat{D}= \{u\re \ell:u\in D$ and $\ell\le |u|\}$.
For  $k<\om$
we will recursively define meet-closed sets $T_k\sse\widehat{D}$,
$N_k<\om$,
a level set $\{s_t:t\in T_k\re N_k\}\sse\bS$, and
 maps $f_k$ and $\psi_k$
 such that the following hold:
\begin{enumerate}
\item
$f_k$ is an ep-similarity embedding of $\{b_i:i<k\}$ onto a subset $\{t_i:i<k\}\sse T_k$.
\item
$N_k=\max\{|t|:\in T_k\}$.

\item
All maximal nodes of $T_k$ are either in $T_k\re N_k$, or else in the range of $f_k$.

\item
For each $t\in T_k\re N_k$,
$\widehat{t}$ is  $s_t$-large.

\item
$\psi_k$ is a $\prec$ and passing type preserving  bijection of
$B\re M_k$ to $T_k\re N_k$.
\item
$B\re M_k$,
$T_k\re N_k$,
 and $\{s_t:t\in T_k\re N_k\}$  all have the same linked  pairs;
that is, the pair
$\{y,z\}\sse B\re M_k$ is linked if and only if
$\{\psi_k(y),\psi_k(z)\}$ is linked
 if and only if
$\{s_{\psi_k(y)},s_{\psi_k(z)}\}$ is linked.

\item
$T_{k-1}\sse T_k$, $f_{k-1}\sse f_k$, and $N_{k-1}<N_k$.
\end{enumerate}

The idea behind $T_k$ is that it will contain an
ep-similarity  image of $\{b_i:i<k\}\cup (B\re M_k)$,
the nodes in the image of $B\re M_k$ being the ones we need to continue extending in order to build an ep-similarity from $B$ into $D$.

To begin, let $r$ denote the root of $\cl(D)$ and assume, without loss of generality, that $|r|\ge 1$.
Then $\widehat{r}=\overline{D}$, which is $\emptyset$-large, and $|r|<|c^D_0|$.
By Lemma \ref{lem.Lemma1},
there is a $t_{-1}\in D\re |c^D_0|$ such that
$\widehat{t_{-1}}$ is $0$-large.
By Lemma \ref{lem.Fact2},
$t_{-1}$ is in $0^{<\om}$.
Let $s_{-1}\in\bS$ be a  node of  minimum length such that
$\varphi^{-1}[\widehat{t_{-1}}]\cap\widehat{s_{-1}}$
is cofinal in  $\widehat{s_{-1}}$.
Note that $s_{-1}$  must be  in $0^{<\om}$.
Define $T_{-1}=\{t_{-1}\}$, $f_0=\emptyset$,  $N_{-1}=|t_{-1}|$ (which equals $|c^D_0|$), and
letting $b_{-1}=c_0$, let
$M_{-1}=0$ and
$\psi_{-1}(b_{-1})=t_{-1}$ (noting that $c_0$ is an initial segment of all nodes in $B$).

Assume  now that  $k\ge 0$ and   (1)--(7) hold for all $k'< k$.
We have two cases: Either $b_k$ is a splitting node or else it is a coding node.
\vskip.1in

\noindent\bf Case I. \rm
$b_k$ is a splitting node.
\vskip.1in

Let $t$ denote $\psi_k(b_k\re M_k)$.
Then
by (5),
 $t$ is a member of $T_k\re N_k$,
 and
by (4), $\widehat{t}$ is $s_t$-large.
Fix  a coding node $c_j\in \varphi^{-1}[\,\widehat{t}\,]\cap\bC$.
The purpose of
 $\varphi(c_j)$ (which we recall, is exactly $c^D_j$) is just to find a level of $\overline{D}$ where we
 can find two distinct nodes which extend $t$, so that $t$ will be extended to a splitting node.
For $i\in \{0,1\}$,
let  $\ell=|c^D_{j+1}|$, and let
\begin{equation}
J_i=\bigcup\{\widehat{u}:u\in\widehat{t}\re \ell\mathrm{\  and \ }
u(|\varphi(c_j)|)=i\}.
\end{equation}
Then
by Lemma \ref{lem.Lemma2},
both
$J_i$  are large, and moreover, $J_0$ is ${s_t}$-$0$-large.
Therefore, by Lemma \ref{lem.Lemma1},
for $i\in\{0,1\}$,
 there is $u_i\in \widehat{t}\re \ell$ and  an extension $s_i$
 of $s_t$ such that
$\widehat{u_i}$ is $s_i$-large;
moreover,  this lemma ensures that we may take $s_0$ to be a $0$-extension of $s_t$.
If $s_t$ is in $0^{<\om}$, then so also $s_0$ is in $0^{<\om}$.
Note that $u_0$ and $u_1$ are incomparable, since they have different passing numbers at $\varphi(c_j)$.
Moreover,  their meet,
$u_0\wedge u_1$, extends $t$.
Define $N_{k+1}=\ell$, which is $|u_i|$ for both $i\in\{0,1\}$.

For all other $y\in (T_k\re N_k)\setminus \{t\}$,
by (4) we know that $\widehat{t}$ is $s_y$-large.
Again by Lemma \ref{lem.Lemma1},
there is a $y'\in \widehat{y}\re N_{k+1}$ such that
$y'$ is $s_y$-$0$-large.
Therefore, there is a $0$-extension  $s_{y'}\contains s_y$ such that
$\widehat{y'}$ is $s_{y'}$-large.
Note  that if $s_y$ is in $0^{<\om}$, then so also $s_{y'}$ is in $0^{<\om}$.
By $0$-extending some of  the nodes if necessary, we may assume that $s_0$, $s_1$  and all $s_{y'}$ have the same length.

Define
$t_k=u_0\wedge u_1$, and extend the map $f_k$ by letting  $f_{k+1}(b_k)=t_k$.
Then $f_k$ is an essential pair strict similarity embedding of $\{b_i:i\le k\}$ onto $\{t_i:i\le k\}$ so (1) holds.
Define
\begin{equation}
T_{k+1}=T_k\cup
\{u_0,u_1, d_k\}
\cup\{y':y\in (T\re N_k)\setminus\{t\}\}.
\end{equation}
Letting  $s_{u_i}$ denote $s_i$ for $i\in\{0,1\}$,
we have  the level set
$\{s_z:z\in T_{k+1}\re N_{k+1}\}$ with the following properties:
For each $z\in  T_{k+1}\re N_{k+1}$,
$\widehat{z}$ is $s_z$-large, so (4) holds.
By Lemma \ref{lem.LemmaAPart2}, this set
$\{s_z:z\in T_{k+1}\re N_{k+1}\}$
 and $T_{k+1}\re N_{k+1}$ have
the same linked pairs, precisely because they each have no new linked pairs.
Since $b_k$ is a splitting node,
$B\re M_{k+1}$ has no new linked pairs over $B\re M_k$.
Thus,
 (5) holds.

Define $\psi_{k+1}$
on  $\bD\cap (\widehat{b_k}\re M_{k+1})$
to be the unique $\prec$-preserving map
onto $\{u_0,u_1\}$; then (6) holds.
Properties (2), (3), and (7) hold by our construction.
This completes Case I.
\vskip.1in

\noindent\bf Case II. \rm
$b_k$ is a coding  node.
\vskip.1in
Let
$t_*$ denote $\psi_k(b_k\re  M_k)\in T_k\re N_k$.
Choose a coding node $c_p\contains s_{t_*}$ in $\bS$ such that
 $|c_p| >|s_t|$ for all $t\in T_k\re N_k$,
 and $|\varphi(c_p)|>t$ for all $t\in T_k\re N_k$.
 Define $t_k=\varphi(c_p)$.
Extend $f_k$ by defining
 $f_{k+1}(b_k)=t_k$,
 and
let $N_{k+1}=|c^D_{p+1}|$.
For   each $i\in\{k,k+1\}$,
let
$E_i=(A\re M_i)\setminus \{b_i\re M_i\}$.
Note that  for each $s\in E_k$, there is a unique $e'\in  E_{k+1}$
such that $e'\contains e$.

Fix any  $e\in E_k$ and let $t:=\psi_k(e)\in T_k\re N_k$.
Let $i$ denote the passing number of $e'$ at $b_k$.
By Lemma \ref{lem.Lemma2},
\begin{equation}
J_i:=
\bigcup\{\widehat{u}:u\in\widehat{t}\re N_{k+1}\mathrm{\ and\ }
u(|t_k|)=i\}
\end{equation}
is large, and is $s_t$-$0$-large if $i=0$.
Thus, by Lemma \ref{lem.Lemma1},
there is some $t'\in\widehat{t}\re N_{k+1}$ such that $\widehat{t'}$ is
large, and is $s_t$-$0$-large if $i=0$.
If $i=0$,
let $s_{t'}$ be a $0$-extension of $s_t$  of length long enough that $\widehat{t'}$ is $s_{t'}$-large.
If $i=1$, let $s_{t'}$ be an extension of $s_t$ such that $\widehat{t'}$ is $s_{t'}$-large.
Thus, (4) will hold, given our definition of $T_{k+1}$ below.
Define $\psi_{k+1}(e')=t'$, and note that $\psi_{k+1}(e')(|t_k|)= e'(|b_k|)$.
As $\psi_{k+1}$ is $\prec$-preserving,  (5) holds.

Let
\begin{equation}
T_{k+1}=
T_k\cup\{b_k\}\cup\{\psi_{k+1}(e'):e'\in E_{k+1}\}.
\end{equation}
By $0$-extending some of the nodes $s_{t'}$ if necessary, we may assume that
the nodes
in the set $\{s_{t'}:  t'\in T_{k+1}\re N_{k+1}\}$ all
  have the same length.
By
the induction hypothesis (6) for $T_k\re N_k$ and $\{s_t:t\in T_k\re N_k\}$
and Lemmas
\ref{lem.LemmaA}
and
\ref{lem.LemmaAPart2},
for each pair $t,u\in T_k\re N_k$,
the pair of nodes $s_{t'},s_{u'}$  is linked if and only
$t',u'$ are linked.
Rewriting,
we have that
$T_{k+1}\re N_{k+1}$
and $\{s_t:t\in T_{k+1}\re N_{k+1}\}$ have the same linked pairs.
Furthermore,  any pair $t',u'\in  T_{k+1}\re N_{k+1}$ are linked only if their $\psi_{k+1}$-preimages in $B\re M_{k+1}$ are linked.
Thus, (6) holds.
By our construction,
(1), (2), (3), and (7) hold  as well.

This concludes the construction in Case II.
\vskip.1in

%***
%(Do we need to write the following?)
%***
%By the induction hypothesis, it holds that
%\begin{align}
%v'(f_k(c^A_i);f_k[A\re i])
%& =
%v(f_k(c^A_i);f_k[A\re i])\cr
%&\sim
%u(c_i^A;A\re i)\cr
%&=u'(c_i^A;A\re i)
%\end{align}
%for each $i<j$.
%***
%By (5) in the induction hypothesis,
%$\psi_k$ is $\prec$ and passing type preserving;
%hence,
%$\psi_k(u)(f_k(c^A_i);f_k[A\re i])\sim
%u(c^A_i;A\re i)$,
%for each $i<j$.

Finally, let $f=\bigcup_k f_k$.
Then $f$ is a strong similarity map from $B$ to $f[B]$:
$f$ preserves the $\prec$-order and the splitting and coding node order of $B$.
Moreover, whenever $b_k$ is a coding node, the construction of $\psi_{k+1}$ ensures that for any $k<n<N$  such that $b_n$ is a coding node in $B$,
$f_{n+1}(b_n)$ has the same passing number at $f_{k+1}(b_k)$ as the passing number of $b_n$ at $b_k$.
So $f$ is passing type preserving.
Further,  property (6) along with the properties of $\bD$ ensures that
 $f$ is an ep-similarity map from $B$ to $f[B]$.
 It follows that $f\re A$ is an ep-similarity from $A$ to $f[A]$.
\end{proof}

\begin{thm}[Canonical Partitions]\label{thm.canonpart}
Let $\bG$ be a finite triangle-free graph and let $h$ be a coloring of all copies of $\bG$ inside $\mathcal{H}_3$.
Then there is a subgraph $\mathcal{H}$ of $\mathcal{H}_3$ which isomorphic to $\mathcal{H}_3$
in which  for each $A\in\Simp(\bG)$,
all copies of $\bG$ in $\Simp(A)$ have the same color.
Moreover,  each $A\in\Simp(\bG)$ persists in the coding tree induced by $\mathcal{H}$.
\end{thm}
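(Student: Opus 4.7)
The plan is to assemble this canonical partition theorem by combining the two main machines developed earlier in the paper: the improved Ramsey upper bound (Theorem~\ref{thm.partitionthm}) and the Persistence Theorem (Theorem~\ref{thm.persistence}). Neither of these alone suffices: the first gives a subcopy of $\mathcal{H}_3$ in which the coloring $h$ uses at most $|\Simp(\bG)|$ colors, while the second guarantees that no ep-similarity type is actually extinguished when we pass to a further subcopy. The two facts together say exactly that the partition of copies of $\bG$ by ep-similarity type is canonical.

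I would begin by fixing a canonically linked coding tree $S\subseteq\bS$ produced by Lemma~\ref{lem.squiggliesttree}, together with a set $W$ of witnessing coding nodes, and then fix the antichain $\bD\subseteq S$ constructed in Lemma~\ref{lem.newbD}, so that $\bD$ codes a copy of $\mathcal{H}_3$ with the additional property that any two coding nodes $c^{\bD}_m,c^{\bD}_n$ with $c^{\bD}_n$ having passing number $0$ at $c^{\bD}_m$ are linked. The point of this last property, combined with $S$ being canonically linked, is that every antichain $A\subseteq\bD$ coding $\bG$ is automatically ep-similar to a unique representative in $\Simp(\bG)$. Now given the coloring $h$, apply Theorem~\ref{thm.partitionthm} to obtain a subcopy $\mathcal{H}'\cong\mathcal{H}_3$ inside the graph coded by $\bD$ on which $h$ takes at most $|\Simp(\bG)|$ colors, in such a way that the color assigned to any copy of $\bG$ in $\mathcal{H}'$ depends only on the ep-similarity type of the antichain of coding nodes (from the coding tree induced by $\mathcal{H}'$) representing it.

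For the persistence clause, let $\mathcal{H}$ be the subgraph obtained above and let $D\subseteq\bD$ be the antichain of coding nodes coding $\mathcal{H}$; apply Theorem~\ref{thm.persistence} with this $D$. Given any $A\in\Simp(\bG)$, the theorem produces an ep-similarity embedding of $A$ (viewed as an antichain in $S$) into $D$. The image is then an antichain in $D$ coding a copy of $\bG$ whose ep-similarity type is exactly $A$, so the equivalence class of $A$ is non-empty in $\mathcal{H}$. Because this argument can be repeated inside any further subcopy of $\mathcal{H}_3$ contained in $\mathcal{H}$ (every such subcopy is itself coded by an antichain $D'\subseteq\bD$, to which Theorem~\ref{thm.persistence} applies verbatim), every ep-similarity type in $\Simp(\bG)$ persists.

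The step requiring the most care is verifying that the ep-similarity type of a copy of $\bG$ in $\mathcal{H}$ is well-defined and stable under passing to sub-antichains of $\bD$, so that the color assigned by the conclusion of Theorem~\ref{thm.partitionthm} and the type produced by Theorem~\ref{thm.persistence} refer to the same equivalence classes. This is where Observation~\ref{obs.clpersists} is essential: inside a canonically linked coding tree, ep-similarity and strict similarity coincide and are preserved under passage to subtrees, so the partition one reads off at the level of $\bD$ agrees with the partition one reads off at the level of $\mathcal{H}$. Once this is noted, the two theorems slot together and the canonical partition drops out.
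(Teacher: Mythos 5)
Your proposal is correct and follows essentially the same route as the paper: the coloring part is obtained by invoking Theorem~\ref{thm.partitionthm} (with the antichain $\bD$ of a canonically linked tree from Lemmas~\ref{lem.squiggliesttree} and~\ref{lem.newbD} behind it), and the persistence clause is exactly an application of Theorem~\ref{thm.persistence} to the sub-antichain $D\subseteq\bD$ coding the subcopy. Your extra remarks about Observation~\ref{obs.clpersists} and the stability of ep-similarity types under passing to sub-antichains simply make explicit what is already built into the proofs of those two theorems, so no new argument is needed.
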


\begin{proof}
Let  $\bG\in\mathcal{G}_3$  be given,
and  fix a finite coloring $h$ of the copies of $\bG$ in $\mathcal{H}_3$.
Let $\bS$ be a coding tree representing $\mathcal{H}_3$.
By the Partition Theorem
\ref{thm.partitionthm},
there is  an antichain $\bD$ of
a canonically linked subtree $S\sse \bS$ such that all
sets of coding nodes $A$ in $\bD$ representing $\bG$ with the same essential pair strict similarity type
have the same color.

By Theorem \ref{thm.persistence},
every ep-similarity type
in $\Simp(\bG)$
 persists in the coding tree by
any  isomorphic subgraph of $\mathcal{H}_3$.
Therefore, $\{\Simp(A):A\in\Simp(\bG)\}$ is a canonical partition.
Hence the big Ramsey degree $T(\bG,\mathcal{H}_3)$ is exactly the  cardinality of the set
$\Simp(\bG)$.
\end{proof}

\begin{cor}\label{cor.main}
Given a finite triangle-free graph $\bG$,
the big Ramsey degree of $\bG$ in the triangle-free graph is exactly  the number of essential pair similarity types of strongly skew antichains coding $\bG$:
\begin{equation}
T(\bG,\mathcal{H}_3)=|\Simp(\bG)|.
\end{equation}
\end{cor}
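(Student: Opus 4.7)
The plan is to derive Corollary \ref{cor.main} by sandwiching $T(\bG,\mathcal{H}_3)$ between the upper and lower bounds that were essentially already proved in the excerpt. The upper bound $T(\bG,\mathcal{H}_3)\le|\Simp(\bG)|$ is exactly Theorem \ref{thm.partitionthm}, which was obtained by combining Theorem \ref{thm.simple} (Ramsey theorem for ep-similar antichains inside a canonically linked coding tree) with Lemma \ref{lem.newbD} (the construction of the antichain $\bD$ coding $\mathcal{H}_3$ with the property that any antichain in $\bD$ is ep-similar to some representative in $\Simp(\bG)$). So nothing new is required for the $\le$ direction; I would simply cite Theorem \ref{thm.partitionthm}.

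The matching lower bound $T(\bG,\mathcal{H}_3)\ge|\Simp(\bG)|$ is the content of the persistence result, and here I would argue as follows. Fix the canonically linked coding tree $S$ and the antichain $\bD\sse S$ from Lemma \ref{lem.newbD}, so that $\bD$ represents $\mathcal{H}_3$ in the same order as $\bS$. Define a coloring $h$ on copies of $\bG$ in $\mathcal{H}_3$ by setting $h(\bG')$ equal to the unique ep-similarity type in $\Simp(\bG)$ of the antichain in $\bD$ representing $\bG'$; this uses exactly $|\Simp(\bG)|$ colors. Now let $\mathcal{H}\sse\mathcal{H}_3$ be any isomorphic subcopy, and let $D\sse \bD$ be the corresponding antichain of coding nodes. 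By Theorem \ref{thm.persistence}, every ep-similarity type of an antichain in $S$ embeds into $D$ as an ep-similarity copy; in particular, every $A\in\Simp(\bG)$ has a representative in $D$, and so every $h$-color is realized on some copy of $\bG$ in $\mathcal{H}$. Hence no subcopy of $\mathcal{H}_3$ reduces the number of colors below $|\Simp(\bG)|$, giving $T(\bG,\mathcal{H}_3)\ge|\Simp(\bG)|$.

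Combining the two inequalities yields the equality $T(\bG,\mathcal{H}_3)=|\Simp(\bG)|$. Alternatively, and more transparently, I would invoke Theorem \ref{thm.canonpart} directly: the two clauses of that theorem (uniform color on each ep-similarity class together with persistence of every class in every subcopy) exactly say that $\{\Simp(A):A\in\Simp(\bG)\}$ is a canonical partition in the sense of \cite{Laflamme/Sauer/Vuksanovic06}, from which the exact count of the big Ramsey degree is immediate. Since both theorems feeding in have already been proved, the only ``work'' in the corollary is verifying that persistence plus the partition bound translate into equality; there is no genuine obstacle. If any step needs attention it is merely recording cleanly that the coloring $h$ defined via the representative ep-type in $\Simp(\bG)$ is well-defined on all of $\mathcal{H}_3$ (not just on $\bD$), which follows because each copy of $\bG$ in $\mathcal{H}_3$ is represented by exactly one antichain in $\bD$ under the canonical enumeration.
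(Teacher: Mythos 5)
Your overall route is the same as the paper's: the corollary is obtained by combining the upper bound of Theorem \ref{thm.partitionthm} with the persistence result, Theorem \ref{thm.persistence}; the paper folds exactly this combination into the proof of Theorem \ref{thm.canonpart} and reads off $T(\bG,\mathcal{H}_3)=|\Simp(\bG)|$ from the resulting canonical partition. So no new ideas are needed, and your ``alternative'' closing paragraph is essentially the paper's argument.

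One concrete step in your lower-bound paragraph is wrong as stated: it is not true that ``each copy of $\bG$ in $\mathcal{H}_3$ is represented by exactly one antichain in $\bD$.'' The antichain $\bD$ codes only one particular subcopy $\mathcal{H}'$ of $\mathcal{H}_3$; a copy of $\bG$ using vertices outside $\mathcal{H}'$ corresponds to a set of coding nodes of $\bS$ that need not lie in $\bD$ at all, so your coloring $h$ is defined only on copies of $\bG$ inside $\mathcal{H}'$, and likewise an arbitrary subcopy $\mathcal{H}\sse\mathcal{H}_3$ need not correspond to any $D\sse\bD$. The standard repair (implicit in the paper) is to transfer along an isomorphism $\psi:\mathcal{H}_3\ra\mathcal{H}'$: color a copy $\bG'\sse\mathcal{H}_3$ by the ep-similarity type of the antichain of $\bD$ representing $\psi[\bG']$, which is well defined and takes exactly $|\Simp(\bG)|$ values by Lemma \ref{lem.newbD} and Definition \ref{defn.SimpG}. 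Then for any subcopy $\mathcal{H}\sse\mathcal{H}_3$, the image $\psi[\mathcal{H}]$ is a subcopy of $\mathcal{H}'$ corresponding to some $D\sse\bD$ coding $\mathcal{H}_3$, and Theorem \ref{thm.persistence} (together with the fact that every type in $\Simp(\bG)$ is realized by an antichain in $S$) shows that all $|\Simp(\bG)|$ colors occur among copies of $\bG$ in $\psi[\mathcal{H}]$, hence among their $\psi$-preimages in $\mathcal{H}$; equivalently, one may simply note that $T(\bG,\mathcal{H}_3)=T(\bG,\mathcal{H}')$ since big Ramsey degrees are isomorphism invariants. With this adjustment your argument is correct and coincides with the paper's.
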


%***We're going to want to mention the ordered version, and show some graph way of viewing this.***

%%%%%%%%%%%%%%%%%%%%%
%%%%%%%%%%%%%%%%%%%%%
%%%%%%%%%%%%%%%%%%%%%
%%%%%%%%%%%%%%%%%%%%%

%\section{The triangle-free Henson graph has a big Ramsey structure}

%As  canonical partitions satisfy Zucker's criterion for a big Ramsey structure, it follows that his results on universal completion flows in \cite{Zucker19} apply to the triangle-free Henson graph.

%Show here how Zucker's criteria is satisfied.

%%%%%%%%%%%%%%%%%%%%%%
%%%%%%%%%%%%%%%%%%%%%%
%%%%%%%%%%%%%%%%%%%%%%
%%%%%%%%%%%%%%%%%%%%%%
%%%%%%%%%%%%%%%%%%%%%%
%%%%%%%%%%%%%%%%%%%%%%

%\section{Concluding Remarks}\label{sec.infdiml}

\bibliographystyle{amsplain}
\bibliography{referencesH3LowerBds}

\end{document}